\providecommand{\mbf}[1]{\mathbf{#1}}
\providecommand{\der}[2]{\frac{\partial #1}{\partial #2}}
\providecommand{\vecbrace}[1]{\left(\begin{array}{c} #1 \end{array}\right)}
\begin{document}

\title{PDE/PDF-informed adaptive sampling for efficient non-intrusive surrogate modelling}
\author{Y. van Halder \thanks{Centrum Wiskunde \& Informatica (CWI), Science Park 123, 1098 XG, Amsterdam, the Netherlands} \and B. Sanderse \footnotemark[2] \and B. Koren \thanks{Eindhoven University of Technology, P.O. Box 513, 5600 MB, Eindhoven, the Netherlands}}

\maketitle

\begin{abstract}
A novel refinement measure for non-intrusive surrogate modelling of partial differential equations (PDEs) with uncertain parameters is proposed. Our approach uses an empirical interpolation procedure, where the proposed refinement measure is based on a PDE residual and probability density function of the uncertain parameters, and excludes parts of the PDE solution that are not used to compute the quantity of interest. The PDE residual used in the refinement measure is computed by using all the partial derivatives that enter the PDE separately. The proposed refinement measure is suited for efficient parametric surrogate construction when the underlying PDE is known, even when the parameter space is non-hypercube, and has no restrictions on the type of the discretisation method. Therefore, we are not restricted to conventional discretisation techniques, e.g., finite elements and finite volumes, and the proposed method is shown to be effective when used in combination with recently introduced neural network PDE solvers. We present several numerical examples with increasing complexity that demonstrate accuracy, efficiency and generality of the method. 
\end{abstract}

\begin{keyword}
PDE residual, interpolation, uncertainty quantification, non-intrusiveness
\end{keyword}
{
%\titleformat{\section}[display]
%{\bfseries\Large}  
%{\filright{\tiny} %\thesection
%}  
%  {1ex}
%  {\vspace{1ex}\filright}
%  [\vspace{1ex}]
%}
%%
%% Start line numbering here if you want
%%
% \linenumbers

%% main text
\section{Introduction}~\\
\noindent Uncertainty Quantification (UQ) has become increasingly important for complex engineering applications. Determining and quantifying the influence of parametric and  model-form uncertainties is essential for a wide range of applications: from turbulent flow phenomena \cite{xiao_quantifying_2016, edeling_simplex-stochastic_2016}, aerodynamics \cite{simon_gpc-based_2010}, biology \cite{cho_experimentaldesign_2003, abagyan_biased_1994} to design optimisation \cite{constantinescu_computational_2011, mateos_monte_2000, papadrakakis_reliability-based_2002}.

Numerical methods in UQ are often divided in two groups; intrusive and non-intrusive. We focus on non-intrusive sampling methods, as they do not change the deterministic model and allow for the usage of black-box solvers. One commonly used non-intrusive method is \textit{stochastic collocation} \cite{xiu_numerical_2010}, which uses a black-box to sample the deterministic model several times in stochastic space, and interpolates these samples to construct a surrogate model. Commonly used sets of interpolation nodes are the Gauss nodes \cite{xiu_numerical_2010} and Clenshaw-Curtis nodes \cite{boyd_chebyshev_2013}. The Gauss nodes possess a high polynomial exactness, but are not nested, which makes them less attractive when the surrogate needs to be refined. On the other hand, Clenshaw-Curtis nodes are nested and are suited for accurate surrogate modelling. However, the number of samples increases exponentially with the number of dimensions of the stochastic space, i.e., the number of uncertainties in the model. This phenomenon is known as the \textit{curse of dimensionality} and limits the applicability of stochastic collocation when the black-box is computationally expensive to sample from. As a remedy, alternatives to the tensor based stochastic collocation were introduced, e.g., Leja-node stochastic collocation \cite{narayan_adaptive_2014,laurentleja}, empirical interpolation \cite{BARRAULT2004667,hesthaven_empirical_2016,hesthaven_efficient_2014,ohlberger_reduced_2015}, `best' interpolation \cite{best_interpolation}, and Smolyak sparse grids \cite{nobile_sparse_2008, nobile_anisotropic_2008}. These alternatives sample the model adaptively in order to reduce the number of samples. Furthermore, empirical interpolation enhances adaptive sampling placement by incorporating knowledge from the underlying model in terms of the Partial Differential Equation (PDE) residual, which is a measure of how well an approximation satisfies the model. Even though this results in a significant decrease in the number of samples when compared to methods that do not take the model into account, the method is still intractable for uncertainty propagation with a large number of uncertainties, as the empirical interpolation bases sample placement on the entire solution, rather than focusing on the Quantity of Interest (QoI). Furthermore, when interested in the statistical properties of the QoI, e.g., mean and variance, using only the residual as a measure for adaptive sampling placement is not efficient, as it does not utilise the Probability Density Function (PDF), which is used in the calculation of these quantities. 

In this work, an empirical interpolation procedure related to \cite{hesthaven_empirical_2016,ohlberger_reduced_2015,best_interpolation} is proposed, with the main differences that: probability information is included in the sampling algorithm, fewer restrictions on the type of the PDE are imposed, and the residual is based solely on the QoI and not on the entire PDE solution. A relation between the error in the surrogate and the PDE residual is given, which justifies the residual as a refinement measure for surrogate construction. Furthermore an alternative refinement measure, which incorporates the PDF, is proposed when interested in the statistical properties of the QoI. Using both the residual and PDF as a measure for defining new sampling locations, leads to accurate statistical properties of the QoI, which converge significantly faster than the procedures in \cite{hesthaven_empirical_2016,ohlberger_reduced_2015,best_interpolation}. Additionaly, it is shown that the proposed method is suited for efficient surrogate construction on complex topologies, which is a common problem in the case of dependent input uncertainties. Our method does not require a specific type of PDE discretisation, e.g., finite elements or finite volumes, and can therefore be used in combination with new state-of-the-art neural network solvers \cite{raissi_physics_2017}. A key part of our approach is the use of the PDE residual, which is discussed later in more detail. In order to compute this residual, the black-box solver needs to give not only the solution values, but also derivatives with respect to spatial/temporal coordinates. This introduces a small degree of intrusiveness in the approach, although no changes to the model equations are necessary  and our approach is therefore still referred to as a non-intrusive approach. Finally, new methods for solving PDEs \cite{raissi_physics_2017} can be used in combination with our method without altering the black-box solver. As our approach is still considered to be non-intrusive and uses a combination of the PDE residual and PDF of the uncertain parameters as a refinement measure, we refer to the proposed method as Non-Intrusive PDE/PDF-informed Adaptive Sampling (NIPPAS).

This paper is outlined as follows: section \ref{sec:ProblemDescription} introduces the problem, section \ref{sec:Method} introduces the new method and proves that the proposed sampling procedure is suitable for accurate surrogate construction. After introducing the proposed method, section 4 shows the individual steps of the method in more detail. Implementation details are discussed in section 5, and section \ref{sec:Results} demonstrates efficiency and accuracy of our method when applied to several test-cases and compares the results with sparse grid interpolation and classical empirical interpolation.
\section{Problem Description}~\\
\label{sec:ProblemDescription}
\noindent Quantifying the effects of parametric uncertainties in computational engineering typically is a three-step process \cite{chantrasmi_pade-legendre_2011}; the input uncertainties are characterised in terms of a Probability Density Function (PDF); the uncertainties are propagated through the model; and the outputs are post-processed, where the Quantity of Interest (QoI) is expressed in terms of its statistical properties. In the present work we focus on the propagation step, and the input distributions are assumed to be given. The underlying model is a PDE, which is assumed to be of the form
\begin{equation}
\sum_{l=1}^{n_l} g_{l}(\mbf{z}, \mbf{x})L^e_l(v^e;\mbf{x})=S(\mbf{z}, \mbf{x})\ ,\ \ (\mbf{x},\mbf{z})\in D\times I_{\mbf{z}}\ ,
\label{eqPD:ModelProblemexact}
\end{equation}
which is supplemented with proper initial and boundary conditions. In \eqref{eqPD:ModelProblemexact}, $n_l$ denotes the number of differential operators in the PDE, $\mbf{z}\in I_{\mbf{z}}\subset \mathbb{R}^{d}$ a $d$-dimensional vector containing uncertain parameters with corresponding joint PDF $\rho(\mbf{z})$, $\mbf{x}\in D$ a vector containing spatial and/or temporal coordinates, $L_l^e$ are differential operators, $g_l$ are known functions, $S$ a source term, and $v^e(\mbf{z}, \mbf{x})$ the exact solution of the PDE. This particular PDE-form assumes that the uncertainties enter the equation via the source term $S(\mbf{z}, \mbf{x})$ and the functions $g_l(\mbf{z}, \mbf{x})$ as parameters in front of differential operators and allows the definition of a non-zero residual in the random space $I_\mbf{z}$. This PDE-form does not comprise all possible PDEs, but many PDEs with parametric uncertainties, e.g., isotropic diffusion equations, Newtonian Navier-Stokes equations and advection-diffusion equations, may be rewritten in this form:
\begin{align}
v^e_t &= z \Delta v^e\ ,\\
v^e_t + (v^e\cdot\nabla)v^e &= -\frac{\nabla p^e}{\rho} + z \Delta v^e\ ,\label{eq:NS}\\
v^e_t + z_1 v^e_x &= z_2 v^e_{xx}\ .
\end{align}

Because the exact solution of \eqref{eqPD:ModelProblemexact} is often not available, a discrete solution vector $\mbf{v}(\mbf{z})$ is computed, e.g., via a finite-difference method or finite-volume method, which satisfies a discretised form of \eqref{eqPD:ModelProblemexact} for $\mbf{z}\in I_{\mbf{z}}$:
\begin{equation}
\sum_{l=1}^{n_l} G_l(\mbf{z}, X)L_l(\mbf{v}(\mbf{z})) = \mbf{S}(\mbf{z}, X)\ ,\ \ X\subset D, \mbf{v}\in \mathbb{R}^{N_{PDE}}\ ,
\label{eqPD:ModelProblem}
\end{equation}
where $L_l:\mathbb{R}^{N_{\text{PDE}}}\rightarrow \mathbb{R}^{N_{\text{PDE}}}$ are the discretised PDE operators, $X=(\mbf{x}_1, ..., \mbf{x}_{N_{\text{PDE}}})$ is the computational grid in space and time consisting of $N_{PDE}$ grid points, $G_l\in\mathbb{R}^{N_{\text{PDE}}\times N_{\text{PDE}}}$ are diagonal matrices with diagonal entries $G_{l, ii}(\mbf{z}, X) = g_l(\mbf{z}, X_i)$, and $\mbf{S}(\mbf{z}, X)\in\mathbb{R}^{N_{\text{PDE}}}$ is the source term evaluated on the computational grid. The initial and boundary conditions are comprised in the source term $\mbf{S}$. The uncertainties enter the equations via the source term and the matrices $G_l$, which comprises the function $g_l$ evaluated on the computational grid $X$. 

We are interested in the dependence of the QoI on the parameters $\mbf{z}$. The QoI $\mbf{u}(\mbf{z})$ is assumed to be a set of linear combinations of the solution vector $\mbf{v}$, i.e., $\mbf{u}(\mbf{z}) = Q\mbf{v}(\mbf{z})$, where
\begin{equation}
Q:\mathbb{R}^{N_{\text{PDE}}} \rightarrow \mathbb{R}^{N_{\text{QoI}}}\ ,
\end{equation}
is a matrix that maps the solution vector $\mbf{v}$ to the QoI $\mbf{u}$. This assumption allows for a suitable refinement measure for sampling, which is introduced later. By assuming linearity of $Q$ we limit the space of possible QoIs, but this limitation is not too severe as many quantities, e.g., integral quantities and mean quantities, can be written in this form.

The goal in this work is twofold: either construct an accurate surrogate for $\mbf{u}(\mbf{z})$ or calculate statistical properties of the QoI with respect to the uncertain parameters $\mbf{z}$. Calculating statistical properties is achieved by constructing a surrogate, which is used in combination with Monte-Carlo sampling to extract the statistical quantities. However if we are only interested in the statistical properties of the QoI, the surrogate does not need to be accurate everywhere, as some areas of the random space contribute little when calculating these statistical properties. Nevertheless, whether we are interested in constructing an accurate surrogate to study the dependency of the QoI on the parameters $\mbf{z}$, or whether we are interested in the statistical properties of the QoI, a surrogate needs to be constructed. We construct a surrogate by applying a PDE-solver to \eqref{eqPD:ModelProblem} and by sampling values from the unknown $\mbf{u}(\mbf{z})$. We sample the QoI $\mbf{u}(\mbf{z}_i)$ at $N+1$ locations $\lbrace\mbf{z}_i\rbrace_{i=0}^N$ in the random space $I_{\mbf{z}}$. The QoI evaluations $\mbf{u}(\mbf{z}_i)$ are calculated as follows
\begin{equation}
\underbrace{\sum_{l=1}^{n_l} G_l(\mbf{z}, X)L_l(\mbf{v}(\mbf{z}))=\mbf{S}(\mbf{z}, X)}_{\text{solve PDE for }\mbf{z}=\mbf{z}_i} \Rightarrow\ \mbf{u}(\mbf{z}_i) = Q\mbf{v}(\mbf{z}_i)\ .
\label{eq:Samples}
\end{equation}
The PDE-solver is assumed to be a black-box, which means that we supply inputs and receive outputs, without the possibility to observe intermediate steps.
After sampling, a surrogate model $\tilde{\mbf{u}}(\mbf{z})$ is constructed by means of polynomial interpolation on the samples $\{(\mbf{z}_i, \mbf{u}(\mbf{z}_i))\}_{i=0}^N$. The interpolant is constructed individually for each element of $\mbf{u}$, such that:
\begin{equation}
\tilde{u}_j(\mbf{z})\approx u_j(\mbf{z}),\ \ \text{for all}\ \ \mbf{z}\in I_{\mbf{z}}\ ,\ \ \text{with}\ \tilde{u}_j(\mbf{z}_i) = u_j(\mbf{z}_i)\ \ i=0,...,N\ ,
\label{eq:surrogatedef}
\end{equation}
where $u_j$ corresponds to the $j$-th element of the vector $\mbf{u}$. The element-wise approximation for the entire QoI vector $\mbf{u}$ is denoted as $\tilde{\mbf{u}}$. Polynomial interpolation is used instead of polynomial regression as it allows for more efficient adaptive refinement and has extensive theoretical grounding. When interpolating, placing a new sample ensures that the new surrogate model is accurate in a neighbourhood around the newly added sample and has therefore immediate impact on the surrogate in the sampled area. This ensures improved accuracy near the new sample location, something which is not necessarily the case when using regression. Choosing proper sample locations $\mbf{z}_i$ is crucial for stable and accurate interpolation and this will be the main focus of this paper. 

Many UQ methods focus either on the PDF \cite{xiu_numerical_2010} or on the PDE residual \cite{hesthaven_empirical_2016} for adaptive sample placement. The PDF indicates which values for $\mbf{z}$ are likely to happen and are therefore important to sample. The PDE residual however gives an indication where surrogate refinement is needed in order for the surrogate to satisfy the underlying PDE. In this paper we propose a novel strategy, in which both the importance of the PDE and PDF is taken into account in determining the sample locations. Finding a set of sample locations, which resembles the importance of both the underlying PDE and the PDF, is the goal of this paper.

\section{Non-Intrusive PDE/PDF-informed Adaptive Sampling}~\\
\label{sec:Method}
\noindent The locations of the interpolation samples determine the accuracy and stability of the surrogate model. We construct a set of interpolation nodes adaptively, by using knowledge from the underlying PDE as refinement measure.
\subsection*{Residual definition}~\\
\noindent For this purpose we define the PDE residual, which indicates how well an approximate solution satisfies the discretised PDE in the random space $I_{\mbf{z}}$. An intuitive definition of the residual can be obtained by first constructing approximations $\widetilde{L_l(\mbf{v}(\mbf{z}))}$ in the random space based on evaluations $L_l(\mbf{v}(\mbf{z}_i))$, and substituting these approximations in the discretised PDE \eqref{eqPD:ModelProblem}:
\begin{equation}
\mbf{R}_v(\mbf{z}):=\sum_{l=1}^{n_l} G_l(\mbf{z}, X)\widetilde{L_l(\mbf{v}(\mbf{z}))} - \mbf{S}(\mbf{z}, X)\ .
\label{eq:PreResidual}
\end{equation}
The quantity $\mbf{R}_v$ indicates how well the approximations $\widetilde{L_l(\mbf{v}(\mbf{z}))}$ satisfy the discretised PDE in the random space $I_{\mbf{z}}$.

\subsection*{Relation between residual and surrogate error}~\\
\noindent The residual $\mbf{R}_v(\mbf{z})$ is a quantity that indicates the quality of a surrogate by substituting the surrogate into the discretised PDE and by calculating the error. The next theorem states a relation between the residual $\mbf{R}_v$ and the error in the surrogate $\tilde{\mbf{u}}$ for linear PDEs, which is used later to define a suitable refinement measure for placing new samples in the random space.
\begin{theorem}
\label{thm:RtoSurr}
Assume the following:
\begin{itemize}
\item Bounded 1D random space $I_z = [z_{\hbox{lb}}, z_{\hbox{rb}}]$.
\item Well-posed discretised linear PDE of the form
\begin{equation}
\sum_{l=1}^{n_l} G_l(z, X) L_l(\mbf{v}(z))=\mbf{S}(z, X)\ ,\ \ X\subset D, \mbf{v}\in \mathbb{R}^{N_{PDE}}\ .
\label{eqthm1:equation}
\end{equation}
\item $L_l$ are discretised linear differential operators, i.e., matrices satisfying:
\begin{equation}
L_l(\mbf{v} + \mbf{w}) = L_l\mbf{v} + L_l\mbf{w}\ .
\end{equation}
\item $G_l(z, X)$ are known matrices as functions of $z$ and $X$.
\item QoI vector can be written as $\mbf{u} = Q\mbf{v}$, where $Q\in\mathbb{R}^{N_{\text{QoI}}\times N_{\text{PDE}}}$.
\end{itemize}
Then the following holds:
\begin{equation}
Q\left(\sum_{l=1}^{n_l} G_l(z, X)L_l\right)^{-1}(\mbf{R}_v(z)) = \tilde{\mbf{u}}(z) - \mbf{u}(z)\ ,
\label{eqthm1:R}
\end{equation}
where $\tilde{\mbf{u}}$ is the interpolant in the random space $I_z$ for each element of the vector $\mbf{u}$, based on the samples $\{(z_i, \mbf{u}(z_i)\}_{i=0}^N$.
\end{theorem}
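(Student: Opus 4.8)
The plan is to exploit the single fact that polynomial interpolation at a fixed node set is a \emph{linear} operation on the sample data, so it commutes both with the discrete operators $L_l$ and with the observation map $Q$. To make this precise, introduce the auxiliary interpolant $\tilde{\mbf{v}}(z)$ of the discrete solution, i.e.\ the component-wise polynomial interpolant of the data $\{(z_i,\mbf{v}(z_i))\}_{i=0}^N$, written in Lagrange form $\tilde{\mbf{v}}(z)=\sum_{i=0}^N \mbf{v}(z_i)\,\ell_i(z)$ with $\ell_i$ the (scalar) cardinal polynomials on the nodes $z_0,\dots,z_N$. Since each $L_l$ is a matrix independent of $z$, we get $L_l\tilde{\mbf{v}}(z)=\sum_i (L_l\mbf{v}(z_i))\,\ell_i(z)$, which is exactly the interpolant of the data $\{L_l\mbf{v}(z_i)\}$; by uniqueness of the interpolating polynomial of degree $\le N$, this means $\widetilde{L_l(\mbf{v}(z))}=L_l(\tilde{\mbf{v}}(z))$. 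The identical argument, using $\mbf{u}(z_i)=Q\mbf{v}(z_i)$, gives $Q\tilde{\mbf{v}}(z)=\tilde{\mbf{u}}(z)$.

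With these commutation identities in hand, the computation is short. First I would substitute $\widetilde{L_l(\mbf{v}(z))}=L_l(\tilde{\mbf{v}}(z))$ into the residual definition \eqref{eq:PreResidual} and collect terms: writing $A(z):=\sum_{l=1}^{n_l}G_l(z,X)L_l$, this yields $\mbf{R}_v(z)=A(z)\tilde{\mbf{v}}(z)-\mbf{S}(z,X)$. Next I would use that the exact discrete solution $\mbf{v}(z)$ satisfies \eqref{eqthm1:equation}, i.e.\ $A(z)\mbf{v}(z)=\mbf{S}(z,X)$, to replace the source term, obtaining $\mbf{R}_v(z)=A(z)\bigl(\tilde{\mbf{v}}(z)-\mbf{v}(z)\bigr)$.

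Finally I would invoke well-posedness of the discretised problem, which I read as invertibility of the system matrix $A(z)=\sum_l G_l(z,X)L_l$ for every $z\in I_z$, so that $A(z)^{-1}\mbf{R}_v(z)=\tilde{\mbf{v}}(z)-\mbf{v}(z)$. Applying $Q$ and using $Q\tilde{\mbf{v}}(z)=\tilde{\mbf{u}}(z)$ together with $Q\mbf{v}(z)=\mbf{u}(z)$ gives $Q A(z)^{-1}\mbf{R}_v(z)=\tilde{\mbf{u}}(z)-\mbf{u}(z)$, which is exactly \eqref{eqthm1:R}.

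The only step that genuinely needs care — and the one I would state most carefully — is the commutation identity $\widetilde{L_l(\mbf{v}(z))}=L_l(\tilde{\mbf{v}}(z))$ and its analogue for $Q$: it relies on using the \emph{same} interpolation nodes for every component, on linearity and $z$-independence of $L_l$ and $Q$, and on uniqueness of the degree-$\le N$ interpolant. Everything else is elementary linear algebra. It is also worth remarking in the proof that $\tilde{\mbf{v}}$ is merely an auxiliary object that need not be computed, and that it is the \emph{exact} discrete equation $A(z)\mbf{v}(z)=\mbf{S}(z,X)$ — not an interpolated surrogate of it — that is substituted, which is precisely why $\mbf{R}_v$ captures the surrogate error transported through $QA(z)^{-1}$.
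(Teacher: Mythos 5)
Your proposal is correct and follows essentially the same route as the paper's proof: Lagrange-form interpolation, the commutation identity $\widetilde{L_l(\mbf{v}(z))}=L_l(\tilde{\mbf{v}}(z))$ via linearity, subtraction of the exact discrete equation, inversion by well-posedness, and application of $Q$. If anything, you make explicit the step $Q\tilde{\mbf{v}}(z)=\tilde{\mbf{u}}(z)$, which the paper's final multiplication by $Q$ leaves implicit.
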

\begin{proof}
Interpolation in 1D is unique and for convenience we choose the Lagrange basis $\ell_i(z)$, where $\ell_i$ is the $i$-th Lagrange basis polynomial defined as:
\begin{equation}
\ell_i(z) = \prod_{\stackrel{k=0}{k\neq i}}^N \frac{z - z_k}{z_i - z_k}\ .
\end{equation}
The approximations $\widetilde{L_l(\mbf{v}(z))}$ are then given by
\begin{equation}
\widetilde{L_l (\mbf{v}(z))} = \sum_{i=0}^N \ell_i(z) L_l (\mbf{v}(z_i))\ ,
\label{eqthm1:1}
\end{equation}
which can be rewritten due to the linearity of $L$ as
\begin{equation}
\widetilde{L_l (\mbf{v}(z))} = L_l(\sum_{i=0}^N \ell_i(z) \mbf{v}(z_i)) = L_l (\tilde{\mbf{v}}(z))\ .
\end{equation}
Substitution into \eqref{eq:PreResidual} gives
\begin{equation}
\mbf{R}_v(z) = \sum_{l=1}^{n_l} G_l(z, X) L_l(\tilde{\mbf{v}}(z)) - \mbf{S}(\mbf{z}, X)\ .
\end{equation}
Subtracting equation \eqref{eqthm1:equation}, results in
\begin{equation}
\mbf{R}_v(z) =\sum_{l=1}^{n_l} G_l(z, X) L_l(\tilde{\mbf{v}}(z)) - \underbrace{\sum_{l=1}^{n_l} G_l(z, X) L_l(\mbf{v}(z))}_{\mbf{S}(\mbf{z}, X)}\ .
\end{equation}
This can be rewritten by using linearity of $L$ as
\begin{equation}
\mbf{R}_v(z) = \sum_{l=1}^{n_l} G_l(z, X) L_l(\tilde{\mbf{v}}(z) - \mbf{v}(z))\ .
\end{equation}
Using well-posedness of the underlying discretised PDE, we can write 
\begin{equation}
\left(\sum_{l=1}^{n_l} G_l(z, X)L_l\right)^{-1}(\mbf{R}_v(z)) = \tilde{\mbf{v}}(z) - \mbf{v}(z)\ ,
\end{equation}
which can be multiplied by the matrix $Q$ to obtain the desired result
\begin{equation}
Q\left(\sum_{l=1}^{n_l} G_l(z, X)L_l\right)^{-1}(\mbf{R}_v(z)) = \tilde{\mbf{u}}(z) - \mbf{u}(z)\ ,
\end{equation}
$\qed$
\end{proof}

\noindent Theorem \ref{thm:RtoSurr} states a relation between the residual and the error in the QoI surrogate. From this relation we can derive the error bound stated in the following corollary. 
\begin{corollary}
Assume the following:
\begin{itemize}
\item The conditions of theorem 1 are satisfied, such that \eqref{eqthm1:R} holds.
\end{itemize} Then the error $\|\tilde{\mbf{u}}(z) - \mbf{u}(z)\|_2$ satisfies
\begin{equation}
\|\tilde{\mbf{u}}(z) - \mbf{u}(z)\|_2 \leq \|Q\|_2\left\|\left(\sum_{l=1}^{n_l} G_l(z, X)L_l\right)^{-1}\right\|_2\|\mbf{R}_v(z)\|_2\ ,
\label{eqcor:errorbound}
\end{equation}
where $\|\cdot\|_2$ is the vector 2-norm or its induced matrix norm.
\end{corollary}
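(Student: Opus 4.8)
The plan is to start directly from the identity \eqref{eqthm1:R} supplied by Theorem~\ref{thm:RtoSurr}, which holds under the stated hypotheses, and simply take the vector $2$-norm of both sides. Since the left-hand side equals $\tilde{\mbf{u}}(z) - \mbf{u}(z)$, we immediately get
\begin{equation*}
\|\tilde{\mbf{u}}(z) - \mbf{u}(z)\|_2 = \left\| Q\left(\sum_{l=1}^{n_l} G_l(z, X)L_l\right)^{-1}(\mbf{R}_v(z)) \right\|_2\ .
\end{equation*}

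The next step is to peel off the factors on the right using submultiplicativity of the induced matrix $2$-norm, i.e.\ $\|AB\|_2 \leq \|A\|_2\|B\|_2$ and $\|A\mbf{w}\|_2 \leq \|A\|_2\|\mbf{w}\|_2$. Applying this with $A = Q$, then with $A = \left(\sum_{l=1}^{n_l} G_l(z, X)L_l\right)^{-1}$ acting on the vector $\mbf{R}_v(z)$, yields
\begin{equation*}
\|\tilde{\mbf{u}}(z) - \mbf{u}(z)\|_2 \leq \|Q\|_2 \left\|\left(\sum_{l=1}^{n_l} G_l(z, X)L_l\right)^{-1}\right\|_2 \|\mbf{R}_v(z)\|_2\ ,
\end{equation*}
which is exactly \eqref{eqcor:errorbound}. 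Implicit here is that the inverse $\left(\sum_{l=1}^{n_l} G_l(z, X)L_l\right)^{-1}$ exists and is bounded; this is guaranteed by the well-posedness assumption of Theorem~\ref{thm:RtoSurr}, so its induced $2$-norm is finite and the bound is meaningful.

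There is essentially no obstacle: the only thing to be careful about is the order in which the submultiplicative inequality is applied and making explicit that $Q$ is a fixed ($z$-independent) matrix while the operator inverse and the residual both depend on $z$, so that the bound holds pointwise in $z \in I_z$. One could optionally remark that the factor $\|Q\|_2$ could be sharpened to the operator norm of $Q$ restricted to the relevant subspace, but for the purposes of a refinement measure the stated bound suffices, since it already exhibits $\|\mbf{R}_v(z)\|_2$ as a computable surrogate-error indicator up to the $z$-dependent stability constant $\|Q\|_2\,\|(\sum_l G_l L_l)^{-1}\|_2$.
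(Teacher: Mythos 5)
Your proof is correct and matches the paper's (implicit) argument: the corollary is obtained exactly by taking the $2$-norm of both sides of \eqref{eqthm1:R} and applying submultiplicativity of the induced norm, with invertibility supplied by the well-posedness assumption. No further comment is needed.
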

Corollary 3.1 states an upper bound for the error in the surrogate in terms of the residual and the discretised differential operator, which gives an indication where the surrogate $\tilde{\mbf{u}}$ deviates significantly from the exact solution. Equation \eqref{eqcor:errorbound} holds for any induced matrix norm. In this paper we adopt the 2-norm.

\subsection*{Refinement measure for adaptive sampling}~\\
\noindent The goal is to sample the QoI in the  random space $I_{\mbf{z}}$ such that we can construct an accurate surrogate model for the QoI by means of polynomial interpolation. Theorem 1 shows that a suitable refinement measure for a linear underlying PDE, is given by 
\begin{equation}
\mbf{R}^*(\mbf{z}) = Q\left(\sum_{l=1}^{n_l} G_l(\mbf{z}, X)L_l\right)^{-1}\mbf{R}_v(\mbf{z})\ ,
\label{eq:RExact}
\end{equation}
If samples are placed such that $\mbf{R}^*$ converges to zero, then the error in the surrogate also converges to zero. Using polynomial interpolation for the approximations $\widetilde{L_l(\mbf{v}(\mbf{z}))}$ in $\mbf{R}_v$ ensures that $\mbf{R}_v$ is close to zero in the neighbourhood of the interpolation samples. The quantity $\left(\sum_{l=1}^{n_l} G_l(\mbf{z}, X)L_l\right)^{-1}$ is a function of $\mbf{z}$, but does not depend on the choice of interpolation samples and therefore acts as a scaling function for the residual $\mbf{R}_v$. This justifies using a greedy approach for placing new samples as follows:
\begin{equation}
\mbf{z}_{\text{new}} = \arg \max_{\mbf{z}\in I_{\mbf{z}}} \|\mbf{R}^*(\mbf{z})\|_2\  .
\end{equation}
The construction of $\widetilde{L_l(\mbf{v}(\mbf{z}))}$ in $\mbf{R}_v$ is a crucial step in the NIPPAS method and is therefore discussed in section 4 in detail. However, when using $\mbf{R}^*$ as a refinement measure, there are two issues:
\begin{itemize}
\item The term $\left(\sum_{l=1}^{n_l} G_l(\mbf{z}, X)L_l\right)^{-1}$ is a-priori unknown and expensive to compute.
\item Equation \eqref{eq:RExact} holds for linear PDEs only.
\end{itemize}
The inverse of the full discretisation matrix is unknown in general and cannot be used for adaptive sample placement, as it is expensive to compute as a function of $\mbf{z}$. However, as this term only acts as a scaling function, omitting this term may result in a compact refinement measure for the QoI, which is then computed as follows:
\begin{equation}
\mbf{R}(\mbf{z}) := Q\mbf{R}_v(\mbf{z})\ .
\label{eq:Residual}
\end{equation}
Notice that in comparison to $\mbf{R}^*$, $\mbf{R}$ can be computed for both linear and non-linear PDEs. Intuitively, we might expect that the refinement measure $\mbf{R}(\mbf{z})$ produces accurate and stable interpolants in combination with the greedy sample placement, because large errors and/or instabilities in the surrogate would lead to large errors in the residual, which then triggers new sample placement due to the greedy approach. A commonly used quantity that is used to indicate the quality of a set of sample locations is the Lebesgue constant \cite{trefethen_approximation_2013}. We note that our proposed greedy approach does not necessarily result in sample locations with an optimal Lebesgue constant, but effectively uses the model information to sample regions of interest, similar to other approaches \cite{narayan_adaptive_2014,hesthaven_empirical_2016,best_interpolation,loukrezis_numerical_2017}.

To summarise, $\mbf{R}$ is our proposed refinement measure, as it is less expensive to compute and more generally applicable when compared to $\mbf{R}^*$. A numerical comparison for both refinement measures is given in section \ref{sec:Results} for a case where $\mbf{R}^*$ can still be computed. Furthermore, the effectiveness of using $\mbf{R}$ as a refinement measure for the case of non-linear PDEs is demonstrated numerically in section \ref{sec:Results}.

\subsection*{Incorporating PDF in refinement measure}~\\
\noindent Although using the residual $\mbf{R}(\mbf{z})$ as a refinement measure for adaptive sampling may result in stable and accurate surrogate models, the sampling procedure may put too much resources in areas that contribute little to the statistical quantities. Such areas are of little interest when we compute statistical quantities like the mean
\begin{equation}
\mathbb{E}[\mathbf{u}] :=\int_{I_{\mbf{z}}} \rho(\mbf{z})\mathbf{u}(\mbf{z})\hbox{d}\mbf{z}\ .
\label{eq:meandef}
\end{equation}
These statistical quantities are calculated by weighing the QoI with the PDF and integrating the resulting quantity over the parameter space $I_{\mbf{z}}$. Areas of $I_{\mbf{z}}$ where both the PDF and QoI are low, contribute little to the integral in \eqref{eq:meandef} and therefore an alternative refinement measure is proposed, which is especially suited when one is interested in accurate statistical quantities rather than an accurate surrogate model. The proposed refinement measure is based on the following theorem:

\begin{theorem}
\label{thm:RtoSurrPDF}
Assume the following:
\begin{itemize}
\item The conditions of theorem 1 are satisfied, such that \eqref{eqthm1:R} holds.
\item $\mathbb{E}[\tilde{\mathbf{u}}]$ and $\mathbb{E}[\mathbf{u}]$ are finite.
\end{itemize}
Then the following holds:
\begin{equation}
\|\mathbb{E}[\tilde{\mathbf{u}} - \mathbf{u}]\|_2 \leq (z_{\text{rb}} - z_{\text{lb}})\sup_{z\in[z_{lb}, z_{rb}]}s(z)\rho(z)\|Q\|_2\|\mbf{R}_v(z)\|_2\ ,
\label{eqthm2:R}
\end{equation}
where
\begin{equation}
s(z) = \left\|\left(\sum_{l=1}^{n_l} G_l(z, X)L_l\right)^{-1}\right\|_2\ .
\end{equation}
\end{theorem}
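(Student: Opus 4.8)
The plan is to reduce the statement to the pointwise error identity already established in Theorem~1 and then integrate it against the density. Since $\mathbb{E}[\tilde{\mathbf{u}}]$ and $\mathbb{E}[\mathbf{u}]$ are assumed finite, linearity of the expectation lets us write $\mathbb{E}[\tilde{\mathbf{u}} - \mathbf{u}] = \int_{z_{\text{lb}}}^{z_{\text{rb}}} \rho(z)\bigl(\tilde{\mathbf{u}}(z) - \mathbf{u}(z)\bigr)\,\mathrm{d}z$, and then substitute \eqref{eqthm1:R} for the integrand, so that
\begin{equation}
\mathbb{E}[\tilde{\mathbf{u}} - \mathbf{u}] = \int_{z_{\text{lb}}}^{z_{\text{rb}}} \rho(z)\, Q\left(\sum_{l=1}^{n_l} G_l(z, X)L_l\right)^{-1}\mathbf{R}_v(z)\, \mathrm{d}z\ .
\end{equation}

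Next I would pass to norms. Taking the vector $2$-norm and applying the triangle inequality for vector-valued integrals moves the norm inside the integral; submultiplicativity of the induced matrix norm then gives the pointwise bound $\bigl\|Q(\sum_{l} G_l(z,X) L_l)^{-1}\mathbf{R}_v(z)\bigr\|_2 \leq \|Q\|_2\, s(z)\, \|\mathbf{R}_v(z)\|_2$, where $s(z)$ is finite for every $z\in[z_{\text{lb}},z_{\text{rb}}]$ by the well-posedness hypothesis inherited from Theorem~1. Combining these two observations yields
\begin{equation}
\|\mathbb{E}[\tilde{\mathbf{u}} - \mathbf{u}]\|_2 \leq \int_{z_{\text{lb}}}^{z_{\text{rb}}} s(z)\,\rho(z)\,\|Q\|_2\,\|\mathbf{R}_v(z)\|_2\, \mathrm{d}z\ .
\end{equation}

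To finish, I would bound the integrand by its supremum over $[z_{\text{lb}},z_{\text{rb}}]$, pull that constant out of the integral, and integrate the remaining $1$, which contributes the length factor $(z_{\text{rb}} - z_{\text{lb}})$ and produces exactly \eqref{eqthm2:R}. The argument is a short chain of standard estimates rather than anything delicate; the only points that deserve a line of justification are that the interchange of norm and integral is legitimate — which is what the finiteness of $\mathbb{E}[\tilde{\mathbf{u}}]$ and $\mathbb{E}[\mathbf{u}]$ guarantees, by making the integrands absolutely integrable — and that $s(z)$ is genuinely well defined and bounded on the compact interval. I would also remark that one could instead use $\int_{z_{\text{lb}}}^{z_{\text{rb}}}\rho(z)\,\mathrm{d}z = 1$ to remove the $(z_{\text{rb}} - z_{\text{lb}})$ factor by pulling only $s(z)\|\mathbf{R}_v(z)\|_2$ out of the integral; the form in \eqref{eqthm2:R} is preferred because it exhibits the product $s(z)\rho(z)$, which is precisely the quantity the PDF-informed refinement measure is designed to drive to zero.
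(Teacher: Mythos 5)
Your argument is exactly the one the paper intends: it only sketches the proof, saying it follows Theorem~1 and bounds the integral by the domain length times a bound on the integrand, which is precisely your chain of substituting \eqref{eqthm1:R} into $\int\rho(z)(\tilde{\mathbf{u}}-\mathbf{u})\,\mathrm{d}z$, moving the norm inside, and taking the supremum. Your closing remark that using $\int\rho=1$ would even yield a slightly sharper bound without the $(z_{\text{rb}}-z_{\text{lb}})$ factor is a valid aside, but the main proof matches the paper's route.
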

\noindent The proof of the theorem follows the proof of theorem \ref{thm:RtoSurr} and bounds the integral by multiplying the domain length $(z_{\text{rb}} - z_{\text{lb}})$ with a bound for the integrand. The full proof is not shown to avoid repetitiveness.

Theorem \ref{thm:RtoSurrPDF} states a relation between the residual and the error in the mean of the QoI. The theorem shows that a proper refinement measure again includes the inverse of the full differentiation matrix. As stated before, this value is unknown in general as a function of $\mbf{z}$ and is expensive to sample. Therefore the proposed refinement measure for calculating statistical quantities is given by
\begin{equation}
\mbf{R}_{\rho}(\mbf{z}) := \rho(\mbf{z})Q\mbf{R}_v(\mbf{z}) = \rho(\mbf{z})\mbf{R}(\mbf{z})\ ,
\label{eq:ResidualPDF}
\end{equation}
and new samples are placed using the greedy approach
\begin{equation}
\mbf{z}_{\text{new}} = \arg \max_{\mbf{z}\in I_{\mbf{z}}} \|\mbf{R}_{\rho}(\mbf{z})\|_2\  .
\end{equation}
Refinement measure \eqref{eq:ResidualPDF} will not place new samples in parts of the random space where the PDF is zero, but is still able to place samples at ``rare events'' in the random space, i.e., parts where the PDF is small but where the PDE residual is large. 

A comparison of the refinement measures \eqref{eq:Residual} and \eqref{eq:ResidualPDF} for convergence of both surrogate construction and statistical quantity calculation are given in section \ref{sec:Results}.

\subsection*{Overview of method}~\\
\noindent Residual definitions \eqref{eq:Residual} and \eqref{eq:ResidualPDF} are used in the NIPPAS method to adaptively refine a surrogate model $\tilde{\mbf{u}}(\mbf{z})$. In detail, the NIPPAS method comprises the following steps:
\begin{enumerate}
\item initial sample placement
\item refinement loop:
\begin{enumerate}
\item compute residual $\mbf{R}(\mbf{z})$ or $\mbf{R}_{\rho}(\mbf{z})$
\item check stopping criterion
\item find $\mbf{z}_{\text{new}} = \arg \max_{\mbf{z}\in I_{\mbf{z}}} \|\mbf{R}(\mbf{z})\|_2$ or $ \|\mbf{R}_{\rho}(\mbf{z})\|_2$
\item sample model at $\mbf{z}_{\text{new}}$
\end{enumerate}
\item interpolate resulting samples
\end{enumerate}
A schematic representation of the methodology is shown in figure \ref{fig:SchematicMethod}. The choice of using either refinement measure \eqref{eq:Residual} or \eqref{eq:ResidualPDF} depends on whether the interest is in constructing an accurate surrogate or calculating accurate statistical quantities. The individual steps of the proposed surrogate model construction are discussed in detail in the next section. Before discussing the NIPPAS steps, the connection is shown between the NIPPAS method and general empirical interpolation method.

\begin{figure*}[!h]
\centering
\includegraphics[width = \textwidth]{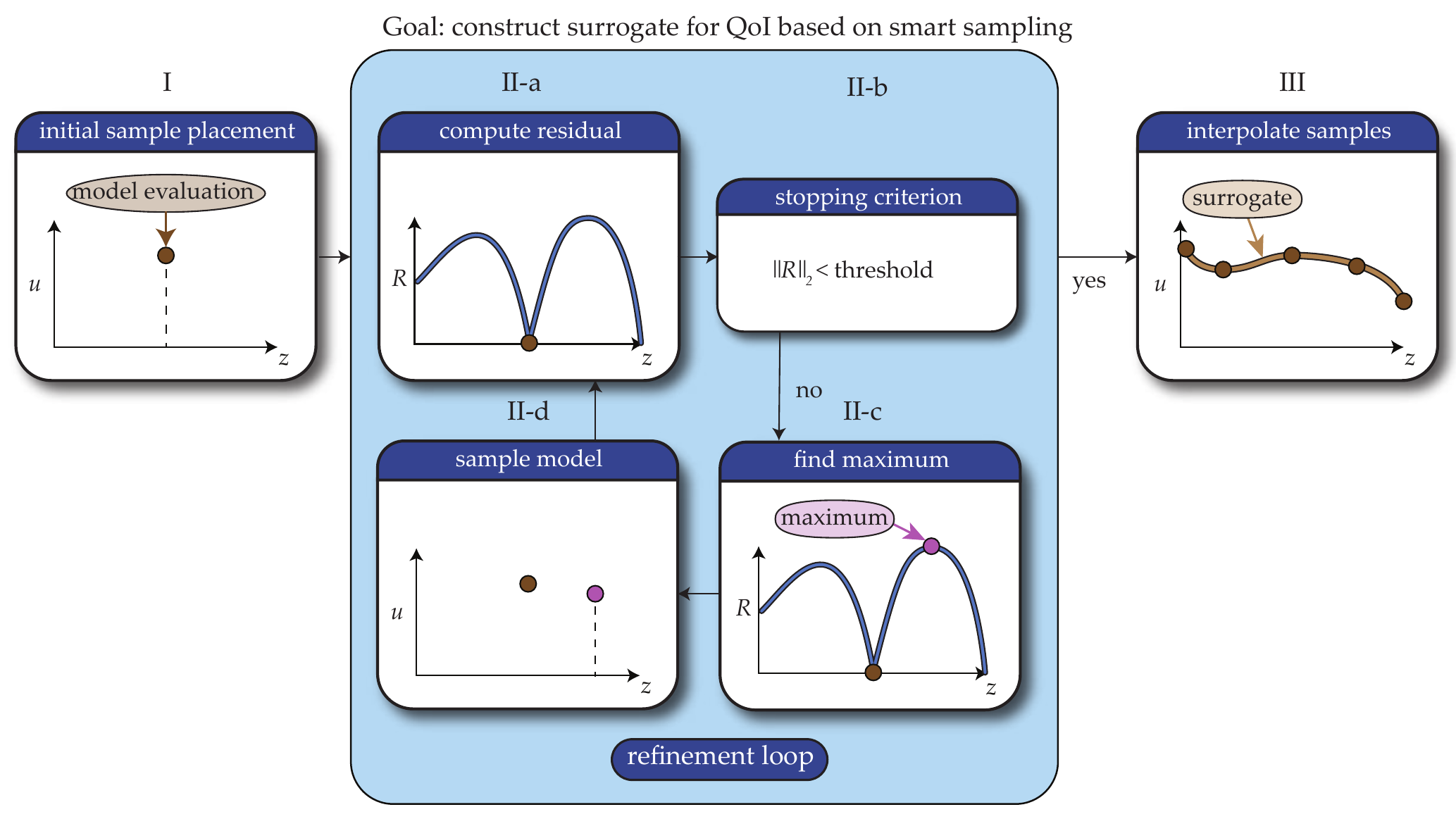}
\caption{\label{fig:SchematicMethod} Schematic representation of the methodology.}
\end{figure*}

The refinement measures were derived from theory, which holds for linear PDEs. We will show in section \ref{sec:Results} that the proposed refinement measures also work for a non-linear underlying PDE. Stable polynomial interpolating surrogates show clustering samples at the boundary of the domain \cite{trefethen_approximation_2013}. Even though stability is not proven for the proposed method, it follows intuitively from using the greedy approach in combination with the residual. If samples do not cluster at the boundaries, then the approximations used to calculate the residual become unstable. These unstable interpolants produce large errors near the boundary of the domain because of the Runge-phenomenon. This results in large residual values, which leads to sample placement near the boundary of the domain. Lastly, note that the sample locations from the NIPPAS are in general scattered, which complicates constructing an interpolant on these samples. This aspect will be discussed in more detail in section \ref{sec:ImplementationDetails}

\subsection*{Connection with empirical interpolation}~\\
\noindent Our NIPPAS procedure has similarities to the classical empirical interpolation procedure \cite{hesthaven_empirical_2016}. The main difference between general empirical interpolation and NIPPAS is the way in which the residual is constructed. In empirical interpolation the residual is based on the entire spatial/temporal surrogate $\tilde{\mbf{v}}$, rather than focussing on the QoI $\tilde{\mbf{u}}$. Consequently, the residual in empirical interpolation is defined as
\begin{equation}
\mbf{R}_{EI}(\mbf{z}) = \sum_{l=1}^{n_l} \mbf{g}_{l}(\mbf{z}, X)\circ L_l(\widetilde{\mbf{v}}(\mbf{z}))\ ,
\label{eq:REI}
\end{equation}
which has the advantage that one only needs to approximate a single term, i.e., $\tilde{\mbf{v}}(\mbf{z})$, instead of constructing several approximations $\widetilde{L_l(\mbf{v}(\mbf{z}))}$. However, the disadvantage is that the refinement measure $\mbf{R}_{EI}$ adds a degree of intrusiveness as it assumes that the operators $L_l$ are known completely \cite{hesthaven_efficient_2014}, which is not always the case and restricts the applicability. 

An advantage of the NIPPAS is that it focuses the sample placement on regions where $\tilde{\mbf{u}}$ needs to be refined, rather than where $\tilde{\mbf{v}}$ needs to be refined. The NIPPAS therefore focuses on areas in the parameter space that are most relevant for constructing an accurate surrogate for $\mbf{u}$, which results in faster convergence. The disadvantage is that the NIPPAS requires a small alteration of the black-box, which will be discussed in section 4.

\section{NIPPAS surrogate construction} ~\\
\label{sec:Method2}
\noindent The NIPPAS procedure when using refinement measure $\mbf{R}$ is shown in this section; the procedure when using $\mbf{R}_{\rho}$ is analogous. A schematic overview of the NIPPAS procedure is shown in figure \ref{fig:SchematicMethod}.

\subsection*{I. Placement of one random initial sample}~\\
\label{subsec:initialsamples}
\noindent The adaptive algorithm starts by performing initial sampling in the random space. Multiple initial sample configurations can be considered. It is advised to start the method with a single Monte-Carlo sample in the random space. This initialisation is used in the remainder of this paper.
%\begin{figure}[!h]
%\centering
%\includegraphics[width = \columnwidth]{FiguresMinimisingResidual_InitialSamples.pdf}
%\caption{\label{fig:InitialSamples} Initial sample location for 1D and 2D random space.}
%\end{figure}
The initial sample location and corresponding model evaluation is denoted as $(Z_0, U_0)=(z_0, \mbf{u}(z_0))$. More generic, a subscript $i$ indicates the number of adaptively placed samples, i.e., $Z_i=\lbrace \mbf{z}_j\rbrace_{j=0}^{j=i}$ and $U_i = \lbrace \mbf{u}(\mbf{z}_j)\rbrace_{j=0}^{j=i}$.

\subsection*{II-a. Non-intrusive computation of residual}~\\
\label{subsec:surrogaterefinement}
\noindent The refinement loop starts with computing the residual \eqref{eq:Residual}.

The residual requires the approximations $\widetilde{L_l(\mbf{v}(\mbf{z}))}$. These approximations can be easily constructed for a given sample set $Z_i$ by applying an interpolation operator $P$ (to be discussed in section \ref{subsec:initialsampleinterpolation}) on the values $\lbrace L_l(\mbf{v}(\mbf{z}_i))\rbrace_{j=0}^i$. However, \textit{the black-box solver in general only returns the solution values $\mbf{v}(\mbf{z}_i)$, and should be altered such that it also returns the values of the individual discretised differential operators $L_l(\mbf{v}(\mbf{z}_i))$. These discretised partial derivatives are used to compute the solution values within the black-box and may be output alongside the solution when sampling the black-box.} This requires only a slight alteration of the black-box, without changing the underlying PDE, and therefore keeps the methodology non-intrusive. To summarise, the approximations of the differential operator terms are given by
\begin{equation}
L_l(\mbf{v}(\mbf{z})) \approx \widetilde{L_l(\mbf{v}(\mbf{z}))} = P[(Z_i, (L_lV)_i]\ ,\ \ l=1,...,n_l\ ,
\label{eq:DifferentialOperatorApproximation}
\end{equation}
where
\begin{equation}
(L_lV)_i = \left\lbrace L_l(\mbf{v}(\mbf{z}_0)),..., L_l(\mbf{v}(\mbf{z}_i))\right\rbrace\ ,\ \ l=1,...,n_l\ ,
\end{equation}
which are the values that should be returned by the black-box solver alongside the solution values $\mbf{v}$. After approximating each differential operator term in the random space through interpolation, the interpolants are substituted into \eqref{eq:Residual}, which returns a function in the variable $\mbf{z}$. Notice that if the black-box solves \eqref{eqPD:ModelProblem} with negligible round-off and iteration error, then we satisfy
\begin{equation}
\mbf{R}(\mbf{z}_i) = 0,\ i=0,...,i\ ,
\end{equation}
which attains local maxima between the samples, as is shown in figure \ref{fig:SchematicMethod}. 

\subsection*{II-b. Stopping criterion based on the residual}~\\
\label{subsec:stoppingcriterion}
\noindent The refinement loop has to be terminated after a number of iterations.

For this purpose we need a stopping criterion, which reflects the quality of the surrogate model. Equation \eqref{eqcor:errorbound} can be used as a stopping criterion. However, \eqref{eqcor:errorbound} does not hold for non-linear PDEs and computing the required norms would be intractable in general. Ideally, the residual will show a similar convergence as the error in the surrogate. If this is the case, the magnitude of the residual is not only useful to adaptively sample the black-box, but it also serves as a reliable indication for the quality of the surrogate. Therefore, the refinement loop is stopped when the following criterion is met:
\begin{equation}
\|\mbf{R}(\mbf{z})\|_2 < \varepsilon\ ,
\label{eq:stoppingcriterion}
\end{equation}
where $\varepsilon$ is a specified threshold.

\subsection*{II-c. Find the location where the residual attains maximum}~\\
\noindent If criterion \eqref{eq:stoppingcriterion} is not met, the surrogate is refined by placing an extra sample according to \eqref{eq:Residual}. In other words, we need to solve the following global optimisation problem in $I_{\mbf{z}}$:
\begin{equation}
\mbf{z}_{\text{max}} = \arg \max_{\mbf{z}\in I_z} \|\mbf{R}(\mbf{z})\|_2\ .
\label{eq:GOP}
\end{equation}
The complexity of this global optimisation problem depends on characteristics of the objective function $\mbf{R}(\mbf{z})$. The residual is in general not smooth and has multiple local maxima. Therefore, in order to solve \eqref{eq:GOP}, a particle swarm method from the Global optimisation toolbox in Matlab is used, which is able to solve non-smooth global optimisation problems.

The solution of the global optimisation problem \eqref{eq:GOP} is denoted $\mbf{z}_{\text{max}}$ and is used in the next step to refine the surrogate.

\subsection*{II-d. Sample the model at the new sample location}~\\
\noindent To refine the surrogate model, we add the new sample $\mbf{z}_{\text{max}}$ to our current sample set $Z_i$: 
\begin{align}
Z_{i+1} &= Z_i\cup \mbf{z}_{\text{max}}\ ,\\
U_{i+1} &= U_i\cup \mbf{u}(\mbf{z}_{\text{max}})\ ,\\
(L_lV)_{i+1} &= (L_lV)_i\cup L_l(\mbf{v}(\mbf{z}_{\text{max}}))\ ,\ \ l=1,...,n_l\ .
\end{align}
The new sample set $(L_lV)_{i+1}$ is suited for constructing an improved approximation $\widetilde{L_l(\mbf{v})}$, see equation \eqref{eq:DifferentialOperatorApproximation}, which is used in the next iteration of the refinement loop.

\subsection*{III Final surrogate is constructed by interpolation}~\\
\noindent After the stopping criterion \eqref{eq:stoppingcriterion} is met, the refinement loop terminates and we end up with a sample set $Z_n$ and an evaluation set $U_n$. In order to construct the final surrogate, we apply the interpolation operator $P$ on the final sample set, leading to
\begin{equation}
\widetilde{\mbf{u}}(\mbf{z}) = P[(Z_n, U_n)]\ ,
\end{equation}
where $\widetilde{\mbf{u}}$ is expected to be an accurate approximation to $\mbf{u}$. If wanted, statistical quantities like \eqref{eq:meandef} can be computed by using this surrogate to compute the desired integrals.

\section{Implementation details}~\\
\label{sec:ImplementationDetails}
\noindent In this section some implementation details are discussed.

\subsection*{Surrogate modelling by polynomial interpolation for scalar QoIs}~\\
\label{subsec:initialsampleinterpolation}
\noindent Interpolation for a scalar QoI is discussed here. Interpolation for a vector QoI is achieved by applying the interpolation operator to each element of the QoI vector individually.

Assume we have a sample set $Z = \{\mbf{z}_i\}_{i=0}^N$ and corresponding QoI values $U = \{u(\mbf{z}_i)\}_{i=0}^N$. As mentioned in section \ref{sec:Method2}, we denote by $P$ the interpolation operator which acts on the set $(Z, U)$. Polynomial interpolation aims to find a polynomial $\tilde{u}$, such that:
\begin{equation}
\tilde{u}(\mbf{z}_i) = P[(Z, U)](\mbf{z}_i) = u(\mbf{z}_i)\ .
\label{eq:Interpolation}
\end{equation}
The polynomial $\tilde{u}$ serves as a surrogate for $u(\mbf{z})$. Finding $\tilde{u}$ is straightforward in a 1D random space, but interpolation in multi-dimensional random spaces is not unique, and the result is influenced by the choice of interpolation basis. Additionally, the sample locations from our adaptive sampling are scattered, which further complicates the interpolation procedure. In order to make the interpolation basis unique, graded lexicographic ordered Chebyshev polynomials $\{\phi_i(\mbf{z})\}_{i=0}^N$ are used, which are known for their stability when using them for interpolation \cite{trefethen_approximation_2013}. The interpolant is found by solving a Vandermonde system \cite{xiu_numerical_2010}:
\begin{equation}
\underbrace{\left( \begin{array}{ c c c}
\phi_0(\mbf{z_0}) & \hdots & \phi_N(\mbf{z_0}) \\ 
\vdots &  & \vdots \\ 
\phi_0(\mbf{z_N}) & \hdots & \phi_N(\mbf{z_N})
\end{array}\right)}_{A} \vecbrace{c_0 \\ \vdots \\ c_{N}} = \vecbrace{u(\mbf{z}_0) \\ \vdots \\ u(\mbf{z}_N)}\ ,
\label{eq:vandermondesystem}
\end{equation}
which results in the interpolant
\begin{equation}
\tilde{u}(\mbf{z}) = \sum_{i=0}^N c_i \phi_i(\mbf{z})\ .
\end{equation}
The Vandermonde matrix $A$ can become ill-conditioned when increasing the number of samples \cite{higham_accuracy_2002}, which makes solving \eqref{eq:vandermondesystem} difficult. Nevertheless, other approaches that circumvent solving \eqref{eq:vandermondesystem} \cite{narayan_stochastic_2012, boor_multivariate_1990, sauer_polynomial_1997} do not have the flexibility to reuse the inverse computations of previous iterations for solving the larger system at the current iteration. This leads to a significant increase in computational expense as the adaptive sampling placement is done iteratively. Therefore, Vandermonde interpolation is used in the NIPPAS method, and a robust procedure for solving the possibly ill-conditioned system \eqref{eq:vandermondesystem} should be used. 

Several methods exist for solving such ill-conditioned systems of equations: regularisation \cite{neumaier_solving_1998}, singular value decomposition \cite{varah_numerical_1973}, pivoted $LU$-factorisation \cite{higham_accuracy_1989}, and pseudo-inversion \cite{klinger_approximate_1968}. In practice, computing the pseudo-inverse is often not advised due to its computational cost, which is $O(N^3)$. However, our adaptive sampling algorithm requires the solution of multiple linear systems with the same Vandermonde matrix $A$ at each iteration, which makes the use of a pseudo-inverse beneficial. Nevertheless, computing the full pseudo-inverse at each iteration would be inefficient and therefore Greville's algorithm \cite{mohideen_recursive_1991} is used to update the pseudo-inverse after adding a new row/column to $A$. Consequently, interpolating polynomials can be constructed without solving the full linear system \eqref{eq:vandermondesystem} and without computing the full pseudo-inverse each time the interpolation operator $P$ is applied. The implementation of Greville's algorithm is discussed in more detail in the next subsection.

\subsection*{Rank-one update of pseudo-inverse}~\\
\noindent As mentioned before, when adding a new sample to the sample set, an extra row and column are appended to the existing Vandermonde matrix \eqref{eq:vandermondesystem}. Therefore, a new pseudo-inverse needs to be determined for this new Vandermonde matrix \cite{mohideen_recursive_1991}. Instead of calculating the new entire inverse, Greville's algorithm is used to iteratively compute the pseudo-inverse of a given matrix $A$.

Assume we have a matrix $A\in\mathbb{R}^{N\times N}$ and its corresponding pseudo-inverse $G$. When a column is added to $A$, i.e.,
\begin{equation}
A' = [A\ a]\ ,
\end{equation}
then Greville's algorithm computes the pseudo-inverse of the new matrix $A'$ recursively. In more detail, the new pseudo-inverse $G'$ of $A'$ is given by \cite{shinozaki_numerical_1972}
\begin{equation}
G' = \left(\begin{array}{c}
G - db^T\\
b^T
\end{array}\right)\ ,
\end{equation}
where
\begin{align}
a^{(1)} &= AGa\ ,\ \ a^{(2)} = a - a^{(1)}\ ,\\
d &= Ga\ ,\\
b^T &= \left\lbrace\begin{array}{l l}
\frac{\left(a^{(2)}\right)^H}{\left(a^{(2)}\right)^Ha^{(2)}}\ ,& \text{ if } a^{(2)} \neq \mbf{0}\ ,\\
(1+d^Td)^{-1}d^TG\ ,& \text{ if } a^{(2)} = \mbf{0}\ ,
\end{array}\right.
\end{align}
where the $H$ denotes the conjugate transpose. Notice that our refinement loop adds both a row and a column to the Vandermonde matrix \eqref{eq:vandermondesystem}, each time a new sample is added. Therefore the procedure described above has to be performed twice, first adding a column $A' = [A\ c]$, then adding a column $r^T$ to the transpose of the resulting matrix, i.e., $(A'')^T = ((A')^T\ r^T)$.

A recursive algorithm for calculating the pseudo-inverse is necessary to reduce the algorithmic complexity of the adaptive sampling procedure. For an $N\times N$ matrix, Greville's algorithm performs an inverse update with complexity $O(8N^2)$, while computing the full inverse directly has complexity $O(N^3)$ \cite{mohideen_recursive_1991} and becomes infeasible quickly in high dimensional spaces $I_{\mbf{z}}$, where the number of samples required for constructing an accurate surrogate is high.

\subsection*{Algorithmic complexity scales well for high dimensions}~\\
\noindent The algorithmic complexity gives an estimate on how the computational effort scales with the number of samples and the dimension of the random space. 

The computational complexity is determined by the complexity of the individual parts; updating pseudo-inverse, interpolation, and particle swarm optimisation. First we discuss the algorithmic complexity of a single iteration in the refinement loop.

Updating the pseudo-inverse is achieved by using Greville's algorithm. Assume a row and column are added to an existing pseudo-inverse of dimensions $i\times i$. Using Greville's algorithm, this can be done in $O(8i^2)$ operations \cite{mohideen_recursive_1991} (discussed in section \ref{sec:ImplementationDetails}). 

The interpolation procedure is fairly cheap when the pseudo-inverse is available. In detail, when interpolating on $i$ sample locations, solving \eqref{eq:vandermondesystem} only requires a matrix-vector multiplication of $O(i^2)$ operations.

The complexity of the particle swarm optimisation is determined by the number of particles used and the maximum number of iterations. In our case we use a particle swarm of $N_p$ particles and a maximum of $N_{it}$ iterations. Typical values for $N_p$ and $N_{it}$ are $100\dim(I_{\mbf{z}})$ and $200\dim(I_{\mbf{z}})$, respectively. In the worst case, the number of maximum iterations is reached and we have to evaluate the residual at $N_pN_{it}$ locations, without the guarantee to have found an optimum. Moreover, if we have $i$ samples, then the residual is a polynomial of degree $i-1$ and can be evaluated with Horner's \cite{higham_accuracy_2002} method in $O(i)$ operations. This results in a total cost of the particle swarm optimisation for a single iteration of the surrogate construction of $O(i\dim(I_{\mbf{z}})^2 n_l)$, where $n_l$ is the number of terms in the PDE \eqref{eqPD:ModelProblem}.

The total cost of each iteration in the refinement loop is now given by:
\begin{equation}
O(\underbrace{i^2}_{\text{Greville}} + \underbrace{i^2}_{\text{interpolation}} + \underbrace{i\dim(I_{\mbf{z}})^2 n_l}_{\text{particle swarm optimisation}})\ .
\end{equation}
Hence, if we run the refinement loop $N$ times, a conservative upper bound for the total algorithm complexity becomes
\begin{equation}
O(N^3 + N^2\dim(I_{\mbf{z}})^2 n_l)\ .
\end{equation}
Notice that the complexity depends quadratically on the dimension of the random space. However, the number of samples necessary for achieving a specified accuracy in higher-dimensional random space will generally increase with the number of dimensions, and computational cost will therefore increase faster than quadratic with the dimension of the random space.

\section{Results}~\\
\label{sec:Results}
\noindent In this section we present multiple examples that illustrate the efficiency of our method.  In order to study and compare convergence properties, two error measures are defined, one for the error in the statistical quantities, i.e., mean, variance, etc., and one for the error in the surrogate. 

The error in the $k$-th statistical moment is the 2-norm of the difference vector between the exact and the approximate statistical moment as computed in \eqref{eq:meandef}:
\begin{equation}
e_{\rho}^{(k)} := \|\mathbb{E}[\mbf{u}^k - \tilde{\mbf{u}}^k]\|_2 = \left\|\int_{I_\mbf{z}}\rho(\mbf{z})(\mbf{u}^k(\mbf{z}) -   \tilde{\mbf{u}}^k(\mbf{z}))d\mbf{z}\right\|_2\ ,
\label{eq:ErrorStatisticalMoments}
\end{equation}
where the integral is calculated using numerical integration with negligible error, i.e., a tensor based Gauss quadrature rule with 100 nodes in each direction. The uncertainties are assumed to be uniformly distributed, $\rho(\mbf{z})=1$, unless stated otherwise. 

Secondly, the error in the surrogate is computed as follows:
\begin{equation}
e := \frac{1}{N_{MC}}\sum_{i=1}^{N_{MC}} \| \mbf{u}(\mbf{z}_i) - \tilde{\mbf{u}}(\mbf{z}_i)\|_2\ ,
\label{eq:L2Error}
\end{equation}
where the sum is taken over a large number of uniform Monte-Carlo samples ($N_{MC}$) in the random space.

The first two test-cases consider a steady and unsteady advection-diffusion equation, respectively. They demonstrate the difference between our approach and conventional empirical interpolation, effect of discretisation on error convergence, difference between refinement measures \eqref{eq:Residual} and \eqref{eq:ResidualPDF}, and applicability to non-hypercube domains. The third and last test case considers the non-linear shallow water equations and demonstrates how our method can be used in combination with a new state-of-the-art neural network based PDE solver.

\subsection*{Steady-state advection-diffusion equation}~\\
\noindent In this part we consider a test-case such that the assumptions in theorem 1 hold and we study the following:
\begin{itemize}
\item Comparison of refinement measure \eqref{eq:Residual} and \eqref{eq:RExact}.
\item Asymptotic sample distribution.
\item Comparison between NIPPAS and conventional empirical interpolation.
\end{itemize}
The underlying PDE is the dimensionless steady state advection-diffusion problem given by:
\begin{equation}
Re(z)v_x - v_{xx} = 0\ ,\ v(0, z) = 0,\ v(1, z)=1,\ x\in[0, 1]\ ,
\end{equation}
where $Re(z)$ is the Reynolds number, which is assumed to be uncertain and a function of $z$. The equations are discretised using a finite-difference approach on an equidistant grid with a resolution of $\Delta x$ with $N_{PDE}$ grid points. The solution vector on the computational grid $\mbf{v}(z) \approx v(\mbf{x})$, with $x_i = i\Delta x$ for $i=1,...,N_{PDE}$, is obtained by solving the following linear system:
\begin{equation}
Re(z)L_1\mbf{v} - L_2\mbf{v} = \mbf{S}(z)\ ,
\end{equation}
where
\begin{align}
L_1 &= \frac{1}{2\Delta x}\left(\begin{array}{ccccc}
0&1& & & \\
-1&0&1 & & \\
 &\ddots & \ddots & \ddots & \\
 & & -1& 0 &1\\
  & & 0& -1& 0
\end{array}\right),\ 
L_2 = \frac{1}{\Delta x^2}\left(\begin{array}{ccccc}
-2& 1& & & \\
1&-2&1 & & \\
 &\ddots & \ddots & \ddots & \\
 & & 1& -2 &1\\
  & & 0& 1& -2
\end{array}\right)\\
\mbf{S}(z) &= \left(\begin{array}{c}
0\\
\vdots\\
\vdots\\
0\\
-\frac{Re(z)}{2\Delta x} - \frac{1}{\Delta x^2}
\end{array}\right)\ ,
\end{align}
where the boundary conditions enter the discretised equation via the vector $\mbf{S}$. Notice that the discretised PDE satisfies the assumptions of theorem 1. Example solutions for different Reynolds numbers are shown in figure \ref{fig:SSAD_Example}.
\begin{figure}[!h]
\centering
\includegraphics[width =0.5\textwidth]{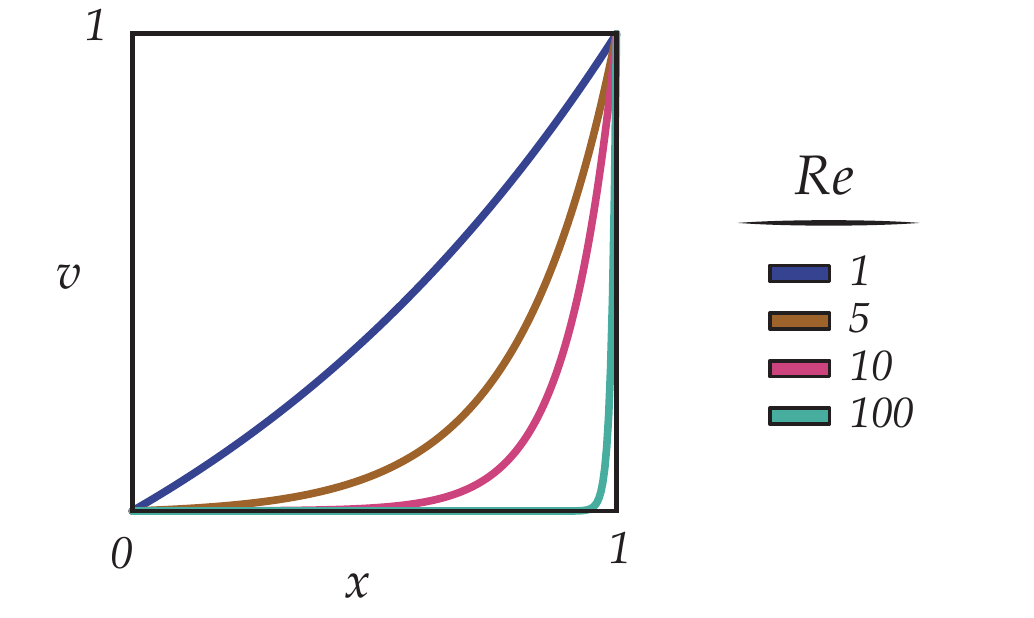}
\caption{\label{fig:SSAD_Example} Example solutions for different Reynolds number. The solution are computed on a computational grid with $\Delta x = 10^{-3}$.}
\end{figure}\\
In order for the discretisation to produce stable results, the cell Reynolds number $Re_{\Delta x} = Re\Delta x$ should satisfy $Re_{\Delta x}<2$, which is satisfied by picking $\Delta x$ sufficiently small, which is $\Delta x = 10^{-3}$ in our case ($N_{PDE} - 1000$).\\

\noindent\textit{{There is no significant difference in convergence between refinement measures $\mbf{R}$ and $\mbf{R^*}$}}~\\\\
The difference between refinement measures $\mbf{R}$ and $\mbf{R}^*$ is the incorporation of the scaling term $\left(\sum_{l=1}^{n_l} G_l(z, X)L_l\right)^{-1}$ in $\mbf{R}^*$. This scaling term is in general expensive to compute as a function of $z$ and is therefore often infeasible to incorporate in the refinement measure. However, for this simple test-case we can compute this scaling term as a function of $z$ and are able to study its effect on sample placement. In order to compare the two refinement measures $\mbf{R}$ \eqref{eq:Residual} and $\mbf{R}^*$ \eqref{eq:RExact}, three different functional forms for $Re(z)$ are used, where $z$ is uniformly distributed between $[0, 1]$. The three different functional forms are given by:
\begin{align}
Re_1(z) &= 99z+1\ ,\\
Re_2(z) &= 10^{2z}\ ,\\
Re_2(z) &= 10^{-2(z-1)}\ ,
\end{align}
and are shown in figure \ref{fig:SSAD_Results1}. All three functions range from $1$ to $100$ for $z\in[0, 1]$, but have completely different shapes, which influence the scaling function and therefore the sample locations when using refinement measure $\mbf{R}^*$.

A comparison between the two refinement measures $\mbf{R}$ and $\mbf{R}^*$ is made by choosing $Q=I$, i.e., we are interested in the entire solution $\mbf{u} = \mbf{v}$ as a function of $z$. The adaptive sample placement starts by placing a single uniformly distributed sample in the random space. The solution values and derivative values $\mbf{v}_x:=L_1\mbf{v}$ and $\mbf{v}_{xx}:=L_2\mbf{v}$ are sampled at the sample location. These values are used to construct the interpolants which are required for computing $\mbf{R}_v$. The convergence of the error in the surrogate $\tilde{\mbf{v}}(z)$ for both refinement measures and the different $Re_i$ is shown in figure \ref{fig:SSAD_Results1}.
\begin{figure*}[!h]
\centering
\includegraphics[width = \textwidth]{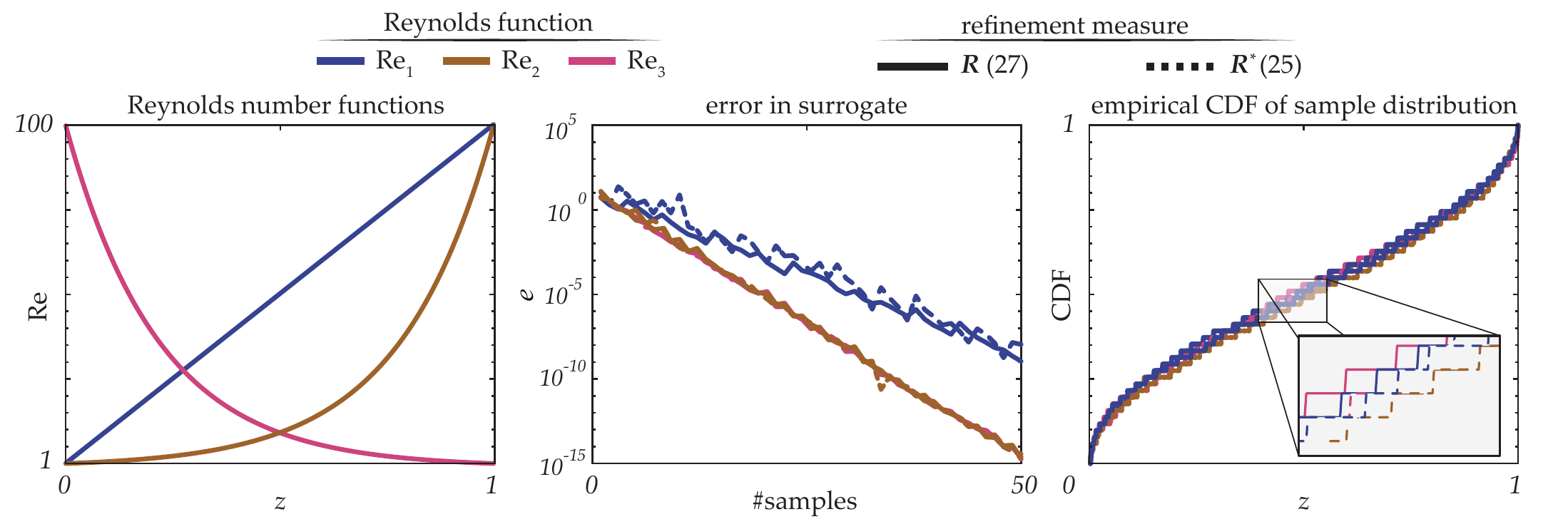}
\caption{\label{fig:SSAD_Results1} The results for the steady-state advection diffusion equation for the two refinement measures $\mbf{R}$ and $\mbf{R}^*$ and three different Reynolds functions in the random space. The solutions are computed using $\Delta x = 10^{-3}$.}
\end{figure*}\\
The error in the surrogate converges exponentially fast to machine precision for both refinement measures without any significant difference. Samples are placed at similar locations for both refinement measures, which is shown in the empirical CDF plot in figure \ref{fig:SSAD_Results1}. Regarding stability of the interpolant, a clustering of samples occurs at the boundaries of the random space, which stimulates stable interpolation. 

Refinement measure $\mbf{R}^*$ uses all the terms that depend on $z$ and should be optimal for sample placement, see equations \eqref{eqthm1:R} and \eqref{eq:RExact}. However, the error in the surrogate is not only determined by sample placement, but also by the polynomial interpolation procedure. Consequently the best refinement measure does not necessarily lead to the best convergence in the error. Including the scaling term in $\mbf{R}^*$ appears to have little effect on the sample placement, at least for this simple test-case. In fact, for this test-case one could refine directly on $\|\tilde{\mbf{u}} - \mbf{u}\|_2$ (because $\mbf{u}$ is explicitly known), and this gives the same results as refining based on $\mbf{R}^*$. As there is no significant difference between the two refinement measures, we use refinement measure $\mbf{R}$ in the remainder of this paper, as it is more generally applicable.\\

\noindent\textit{{NIPPAS converges faster than conventional empirical interpolation depending on the QoI}}~\\\\
We compare convergence of the surrogate for the NIPPAS based on \eqref{eq:Residual} with the conventional empirical interpolation based on \eqref{eq:REI}. The main difference (in absence of incorporation of the PDF) is the fact that empirical interpolation places new adaptive samples based on a residual which is based on the entire solution $\mbf{v}$, whereas the NIPPAS uses a residual based only on the QoI $\mbf{u}$. To compare both methods, the QoI is set to $u = (\mbf{v})_{500}$ (the solution computed at the middle of the computational grid, with $Q = (0, ..., 0, 1, 0, ..., 0)$), and the Reynolds number is given by $Re_1(z)$. The convergence comparison is shown in figure \ref{fig:SSAD_Results2}.
\begin{figure}[!h]
\centering
\includegraphics[width = 0.5\textwidth]{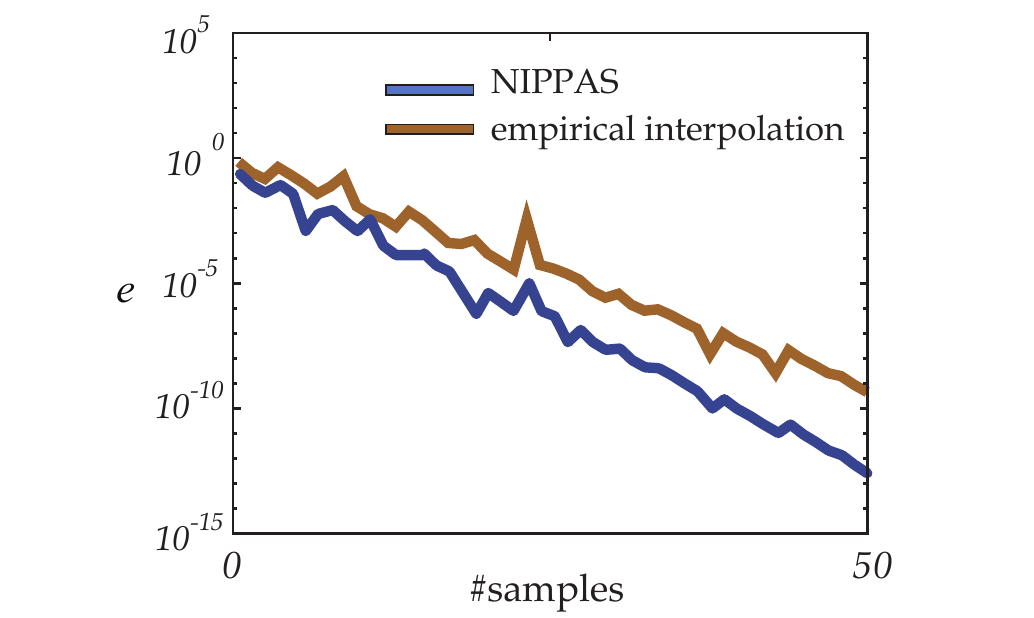}
\caption{\label{fig:SSAD_Results2} Convergence comparison for NIPPAS method and conventional empirical interpolation for the steady-state advection diffusion equation.}
\end{figure}\\
The NIPPAS enhances the surrogate by focusing on locations that are relevant for the QoI. This leads to faster convergence for the NIPPAS when compared to conventional empirical interpolation. However, in case more solution values from the solution vector $\mbf{v}$ would be incorporated, i.e., more non-zero entries in the columns of $Q$, the convergence speed-up will decrease and eventually the convergence coincides with the one from empirical interpolation when the QoI uses the entire solution vector $\mbf{v}$.

\subsection*{Unsteady advection-diffusion equation}~\\
\noindent In the previous example we applied NIPPAS to a steady-state PDE for a 1D random space in order to compare refinement measures and to compare with conventional empirical interpolation. In this section we study the following:
\begin{itemize}
\item The effect of the discretisation method applied to the PDE.
\item The accuracy of NIPPAS for approximating statistical moments in a 2D random space.
\item The construction of a surrogate model on non-hypercube domains.
\end{itemize}
Therefore, we thoroughly study the NIPPAS for an unsteady advection-diffusion equation with two parameter uncertainties. We start with a 2D hypercube random space and then gradually increase the complexity of the test-case. 

The underlying PDE is the 1D advection-diffusion equation, given by
\begin{equation}
v_t + z_1 v_x = z_2 v_{xx}\ ,
\label{eq:Advectiondiffusion}
\end{equation}
where the advection parameter $z_1$ and the diffusion parameter $z_2$ are assumed to be uncertain and uniformly distributed between $[0, 2\pi]$. For the problem \eqref{eq:Advectiondiffusion} to be well-posed, initial and boundary conditions are required. A spectral spatial discretisation method \cite{canuto_spectral_2012} is used for solving \eqref{eq:Advectiondiffusion} and the boundary conditions are taken periodic for $x\in[0, 2\pi]$ and the initial condition is given by
\begin{equation}
v(x, 0) = v_0(x) = \sin(x)\ .
\label{eq:advectiondiffusionintialcondition}
\end{equation}
The spectral spatial discretisation is performed on an equidistant grid $x_i = (2\pi i)/N_x$ for $i=0,...,N_x$, where $N_x = 256$. This results in a solution vector $\mbf{v}(t)$:
\begin{equation}
\mbf{v}(t) = \vecbrace{v_0(t) \\ \vdots \\ v_{N_x}(t)}\ ,
\end{equation}
where $v_i(t)$ is the approximate solution at grid point $x_i$. Additionally, taking derivatives can be written in terms of a matrix-vector multiplication
\begin{align}
v_x &\approx D_x \mbf{v}\ ,\\ 
v_{xx} &\approx (D_x)^2 \mbf{v}\ ,
\end{align}
where $D_x$ is the spectral differentiation matrix \cite{canuto_spectral_2012}. As a result, the solution vector $\mbf{v}(t)$ can be obtained by solving the semi-discrete problem
\begin{equation}
\mbf{v}_t + z_1 D_x \mbf{v} = z_2 D^2_x \mbf{v}\ ,
\label{eq:SDproblem}
\end{equation}
which can be rewritten in the form \eqref{eqPD:ModelProblem} by discretising the time-derivative and formulating the resulting set of equations in matrix form. Notice that this results in a block-diagonal system.

A surrogate is created for the quantity $u(z_1, z_2) = v_{N_x}(1)$, which is the solution at the right boundary of the physical domain at $t=1$.\\

\noindent\textit{{Effect of discretisation method is small}}~\\\\
As we use a spectral method with a fine resolution for the spatial discretisation and the problem is linear, the time discretisation error is expected to be dominant, and therefore the effect of the time discretisation method is studied. The semi-discrete problem \eqref{eq:SDproblem} is solved using different time discretisation schemes, i.e., backward-Euler, Crank-Nicolson and fourth-order explicit Runge-Kutta (RK4), where we fix the time step at $\Delta t = 10^{-5}$. 

To demonstrate the efficiency of the NIPPAS method, convergence is compared to a nested Smolyak grid \cite{klimke_algorithm_2005} on the Clenshaw-Curtis nodes. The errors in the surrogate for both the NIPPAS method and the Smolyak solution are computed by using a Monte-Carlo reference solution based on $5000$ samples. Notice that the reference solution changes, depending on the time discretisation. The reference solution for a Crank-Nicolson time discretisation is shown in figure \ref{fig:advdiffMCreference}. 
\begin{figure}[H]
\centering
\includegraphics[width = 0.49\textwidth]{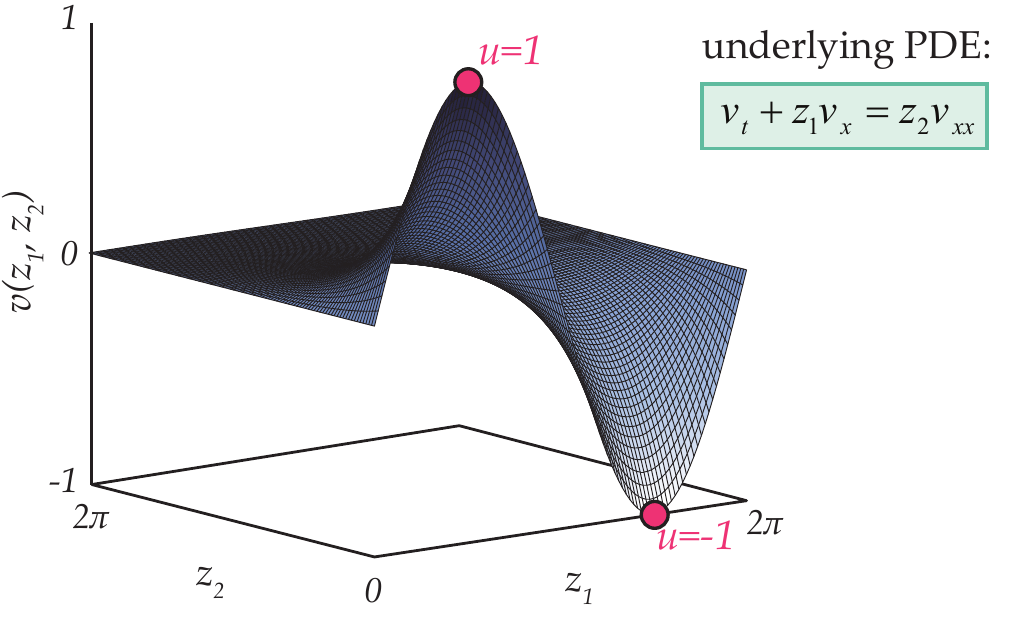}
\caption{\label{fig:advdiffMCreference} Reference solution for the advection-diffusion equation for the quantity $u(z_1, z_2) = v_{N_x}(1)$ based on 5000 samples.}
\end{figure}~\\
The choice of the time discretisation method affects the accuracy of the black-box solver, and changes the shape of the surrogate slightly, as different time discretisations do not give the same QoI at the exact same location in random space. As a result, when changing the discretisation, a new reference solution has to be computed to study convergence. To further clarify, there is a difference between the exact/wanted surrogate, which is obtained by solving \eqref{eqPD:ModelProblemexact}, and the discrete exact surrogate, obtained by solving the discretised equations \eqref{eqPD:ModelProblem}. This difference is shown in the following equation:
\begin{equation}
\underbrace{\|u-\tilde{u}_{N_x}\|_2}_{\text{error w.r.t. exact solution}}\leq \underbrace{\|u-u_{N_x}\|_2}_{\text{discretisation error}} + \underbrace{\|u_{N_x} - \tilde{u}_{N_x}\|_2}_{\text{error w.r.t. discrete solution}}\ ,
\label{eq:ADerror}
\end{equation}
where $u$ is the exact solution (solution of \eqref{eqPD:ModelProblemexact}), $u_{N_x}$ is the discrete solution computed using $N_x+1$ grid points (solution of \eqref{eqPD:ModelProblem}), and $\tilde{u}_{N_x}$ is the surrogate based on the discrete solutions. For this reason, we plot both the error in the surrogate with respect to the exact solution and the discrete solution. The results are shown in figure \ref{fig:AdvDiffTimeDiscrResults}.
\begin{figure}[!h]
\centering
\includegraphics[width = \textwidth]{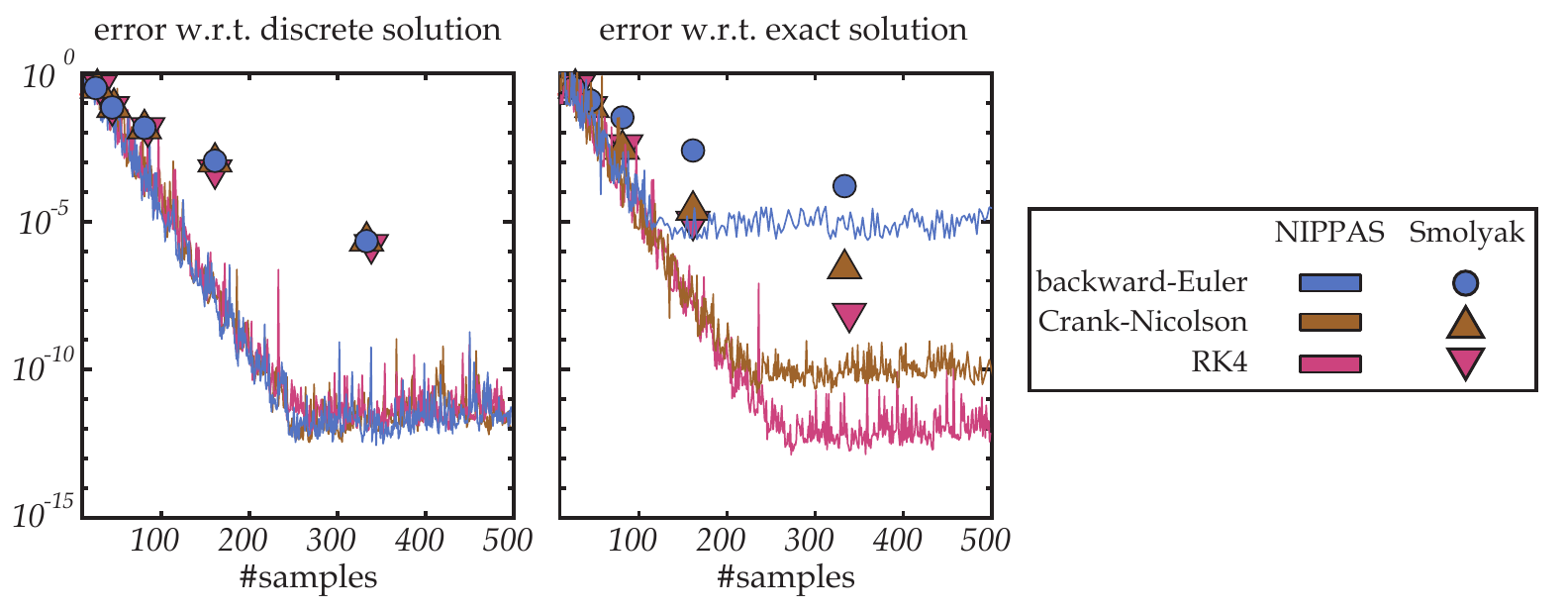}
\caption{\label{fig:AdvDiffTimeDiscrResults} Error \eqref{eq:L2Error} comparison between the Smolyak sparse grid surrogate and the NIPPAS surrogate for different time discretisation methods with $\Delta t = 10^{-5}$ and $N_x=256$. The errors are plotted with respect to both the discrete solution of \eqref{eq:Advectiondiffusion} and the exact solution of \eqref{eq:SDproblem}.}
\end{figure}~\\
The error with respect to the discrete solution converges to zero, with a convergence rate that is significantly faster than the Smolyak sparse grid. Convergence is non-monotonic, which is common for adaptive sampling methods \cite{narayan_adaptive_2014}. The error with respect to the exact solution stalls before machine precision is reached, which is due to the discretisation error. To clarify, the error with respect to the discrete solution converges to zero and equation \eqref{eq:ADerror} therefore states that the observed error after stalling is the discretisation error. 

The convergence behaviour of the error with respect to the discrete solution is not dependent on the time discretisation, which indicates that the performance of our method does not depend on the underlying discretisation.\\

\noindent\textit{{Faster convergence for statistical quantities with PDF weighing}}~\\\\
Next, to study the effect of the two different refinement measures \eqref{eq:Residual} and \eqref{eq:ResidualPDF}, we now assume independent $\beta$-distributed input uncertainties $z_1$ and $z_2$:
\begin{equation}
\rho(z_1, z_2) = c\cdot(2\pi z_1)^{\alpha_1}(2\pi z_2)^{\alpha_2}(1-2\pi z_1)^{\beta_1}(1-2\pi z_2)^{\beta_2}\ ,
\label{eq:betadistr}
\end{equation}
where $(\alpha_i,\beta_i)$ are parameters that characterise the PDF and the constant $c$ is chosen such that the PDF has total probability 1 on $[0, 2\pi]^2$. Convergence behaviour is studied as a function of the shape of the underlying PDF and therefore we vary the PDF parameters as follows
\begin{equation}
\alpha_i\in\{1, 2, 3, 4, 5\},\ \beta_i\in\{1,2,3,4,5\},\ i=1,2\ ,
\end{equation}
which results in a total of 625 PDFs with totally different shapes. The convergence statistics of the mean, variance and the entire surrogate are shown in figure \ref{fig:AdvDiffPDF}.
\begin{figure*}[!h]
\centering
\includegraphics[width = \textwidth]{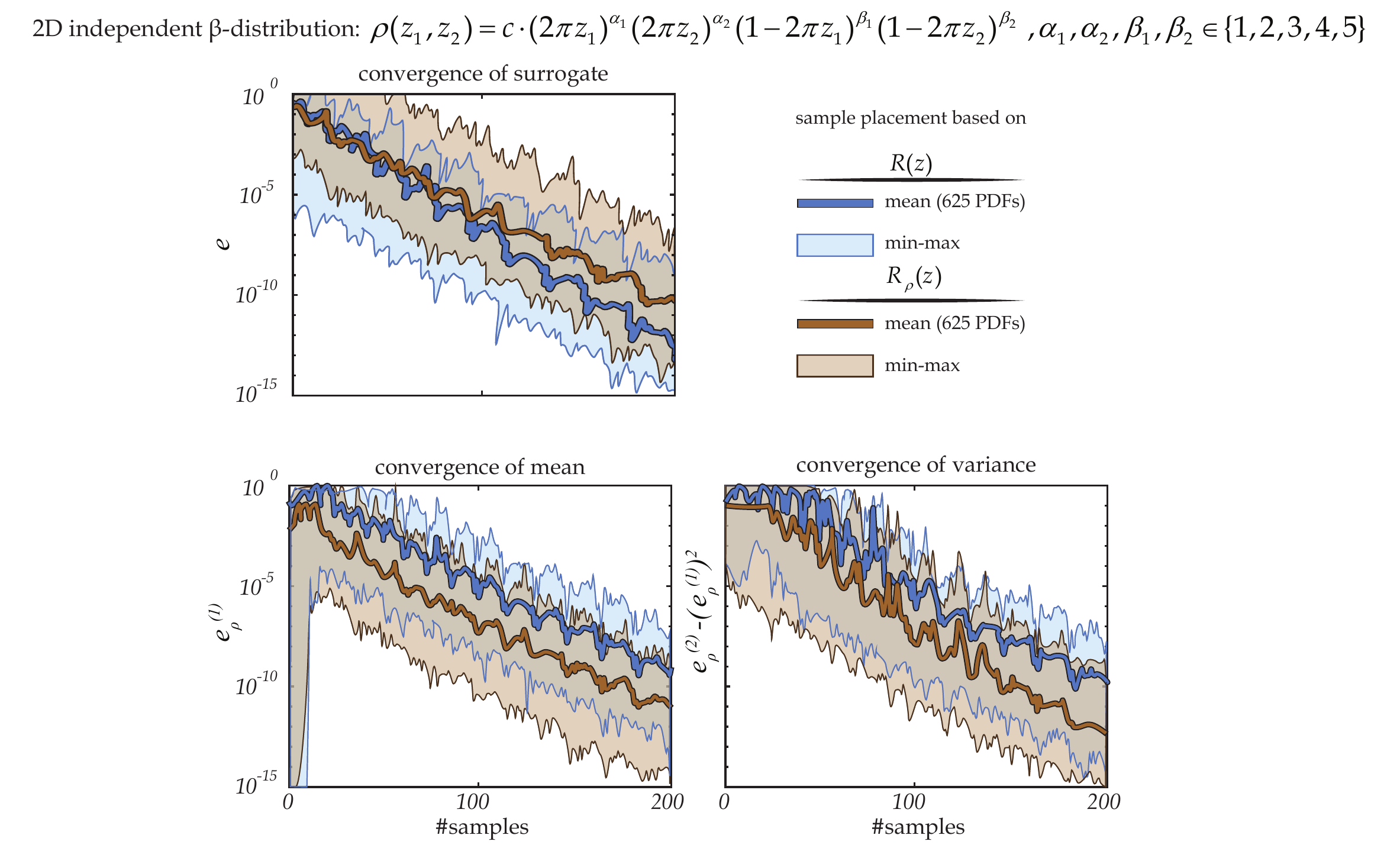}
\caption{\label{fig:AdvDiffPDF} Convergence in advection-diffusion surrogate, mean and variance for the two refinement measures \eqref{eq:Residual} and \eqref{eq:ResidualPDF}. The error in the surrogate is computed  with \eqref{eq:L2Error} with 5000 Monte-Carlo samples. The errors in the mean and variance are calculated with \eqref{eq:ErrorStatisticalMoments}.}
\end{figure*}\\
Figure \ref{fig:AdvDiffPDF} shows the average convergence behaviour taken over the 625 different PDFs. From the figure we may conclude that weighing the residual with the PDF results in slower convergence for the surrogate model, see equation \eqref{eq:L2Error}, but leads to faster convergence in both the mean and the variance, which are computed using \eqref{eq:ErrorStatisticalMoments}. Furthermore, the variation in convergence over all 625 different PDFs, given by the shaded area around the mean line, has similar width for both refinement measures \eqref{eq:Residual} and \eqref{eq:ResidualPDF}. The choice of refinement measures thus depends on the type of convergence the user requires, i.e., convergence in the surrogate or convergence in the statistical quantities. Notice that the shaded area for the convergence in the mean has a minimum that starts at 0 initially, which is due to the fact that for some PDFs the exact mean is zero, and as the initial sample is placed at a location in the random space where the response is also 0, we start with an approximate mean that is equal to the exact mean. However, the sample placement does not stop immediately, as the stopping criterion \eqref{eq:stoppingcriterion} is not met.\\

\noindent\textit{{Method shows fast convergence for non-hypercube domains}}~\\\\
The 2D $\beta$-distribution \eqref{eq:betadistr} just discussed, comprises two independent uncertainties and therefore the space in which we construct a surrogate is a hypercube. However, when dealing with dependent uncertainties, the space in which the surrogate is constructed, is in general not a hypercube, and many existing UQ methods fail when dealing with a complex random space.

To study the performance of the NIPPAS method on more general geometries, we assume a non-hypercube domain $D\subset\mathbb{R}^d$ with associated uniform PDF $\rho(\mbf{z})$. In order to sample the model on the domain $D$, we restrict the residual \eqref{eq:Residual} to $D$ by multiplying it with an indicator function:
\begin{equation}
R_D(\mbf{z}) = R(\mbf{z})\mathbb{I}_D(\mbf{z})\ , 
\end{equation}
where
\begin{equation}
\mathbb{I}_D(\mbf{z}) = \left\lbrace \begin{array}{l l}
1,\ & \text{if}\ z\in D\ ,\\
0,\ & \text{otherwise}\ .
\end{array}\right.
\end{equation}
After altering the residual definition slightly and placing an initial sample randomly in the non-hypercube random space, the NIPPAS method can be applied straightforwardly. Note that the basis functions are the same for the hypercube case, which are given by the Chebyshev polynomials defined on the smallest hypercube that comprises $D$. In order to show the efficiency of our method for non-hypercube domains, we again construct a surrogate for the response shown in figure \ref{fig:advdiffMCreference}, but now on more complexly shaped domains. Three different geometries with different characteristics are chosen:
\begin{itemize}
\item sharp corners:\\
\[(z_2\geq 0)\cap(z_2 - \sqrt{3}z_1 \leq 0)\cap(\sqrt{3}z_1 + z_2 \leq 2\pi\sqrt{3})\ ,\]
\item smooth boundary:\\
\[(z_1 - \pi)^2 + (z_2 - \pi)^2\leq \pi^2,\]
\item domain with holes:
\begin{align*}
&(z_1, z_2)\in[0,2\pi]^2,\ \text{but}\\
&\neg\left((z_1 - \pi)^2 + (z_2 - \pi)^2\leq \frac{\pi^2}{4}\right)\cup\\ 
&\neg\left((z_1 - \frac{\pi}{3})^2 + (z_2 - \frac{\pi}{3})^2\leq \frac{\pi^2}{25}\right)\cup\\
&\neg\left((z_1 - \frac{5\pi}{3})^2 + (z_2 - \frac{5\pi}{3})^2\leq \frac{\pi^2}{9}\right)\ .
\end{align*}
\end{itemize}
The convergence results are shown in figure \ref{fig:AdvDiffComplexDomain}.
\begin{figure*}[!h]
\centering
\includegraphics[width = \textwidth]{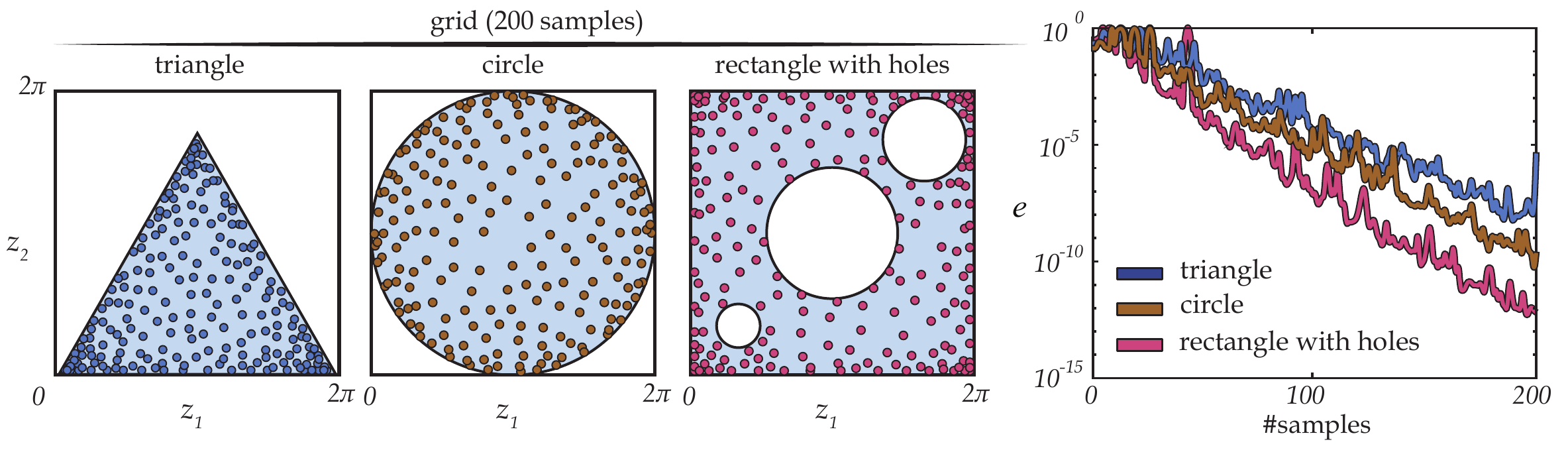}
\caption{\label{fig:AdvDiffComplexDomain} Convergence of surrogate for three different non-hypercube domains.}
\end{figure*}\\
The results show an exponential convergence behaviour for all three different geometries, but the convergence rates slightly differ, which is likely caused by the choice of basis, which is suboptimal for all domains. Furthermore, weighing the residual with the PDF in case of a non-uniformly distributed uncertainties is again possible, but results are similar to the results shown in figure \ref{fig:AdvDiffPDF} and are not shown for repetitiveness.

It has to be pointed out that the choice of a lexicographically ordered Chebyshev basis is far from optimal on non-hypercube domains. Nevertheless, good convergence behaviour is still achieved.

\subsection*{Shallow water equations with dependent random inputs}~\\
\noindent As last test-case, we study the performance of the NIPPAS method for a hyperbolic non-linear PDE with dependent random inputs in a non-hypercube random space. In this test-case the underlying model is non-linear and comprises three uncertain parameters which lie on a 2D triangular manifold in a 3D space and therefore combines the difficult aspects from all previous test-cases. The underlying model is a system of conservation laws, namely the 1D shallow water equations (SWEs). The SWEs describe inviscid fluid flow with a free surface, under the action of gravity, with the thickness of the fluid layer small compared to the other length scales \cite{vreugdenhil_numerical_2013}:
\begin{equation}
\der{}{t}\vecbrace{h \\hv} + \der{}{x}\vecbrace{hv \\hv^2 + gh^2/2} = \mbf{0}\ ,
\label{eq:SWE1D}
\end{equation}
where $h$ is the free surface height (thickness of the fluid layer), $v$ the velocity, and $g$ the acceleration of gravity. Reflective boundary conditions are imposed at $x=\pm 1$ and the initial condition for the system of PDEs is given by a Riemann problem:
\begin{equation}
\vecbrace{h \\ v}(x, t=0) = \left\lbrace \begin{array}{ll}
\vecbrace{h_l \\ v_l},& x\leq 0\ ,\\
\vecbrace{1 \\ 0},& x> 0\ ,\\
\end{array} \right. 
\end{equation}
leading to a so-called dambreak problem. The three uncertain inputs are $z_1=g,\ z_2=h_l$ and $z_3=v_l$, and are assumed to jointly follow a Dirichlet distribution, which is the multivariate generalisation of the 1D $\beta$-distribution \cite{hazewinkel_encyclopaedia_1990}, and is often used as a prior to a discrete categorical distribution in Bayesian statistics. The PDF of the $3D$ Dirichlet distribution with shape parameters $(\alpha_1, \alpha_2, \alpha_3)$ is given by:
\begin{equation}
\rho(z_1, z_2, z_3; \alpha_1, \alpha_2, \alpha_3) = \frac{\Gamma(\sum_{i=1}^3 \alpha_i)}{\prod_{i=1}^3 \Gamma(\alpha_i)}\prod_{i=1}^3 z_i^{\alpha_i - 1}\ ,
\label{eq:DirichletPDF}
\end{equation}
which is defined on the unit simplex
\begin{equation}
\sum_{i=1}^3 z_i = 1,\ \text{and}\ z_i\geq 0\ \forall i\ .
\end{equation}
For this specific test-case we scale/translate the unit simplex to a triangle with corner points 
\begin{equation}
(g, h_l, v_l) = \{(12, 0.5, -0.5), (8, 1.5, -0.5), (8, 0.5, 0.5)\}\ .
\end{equation}
For testing the efficiency of refinement measure \eqref{eq:ResidualPDF}, we consider the shape parameter set $(\alpha_1, \alpha_2, \alpha_3) = (5, 2, 2)$. This specific shape parameter set corresponds to an asymmetric PDF and is shown in figure \ref{fig:SWEPDF}.
\begin{figure}[h]
\centering
\includegraphics[width = 0.5\textwidth]{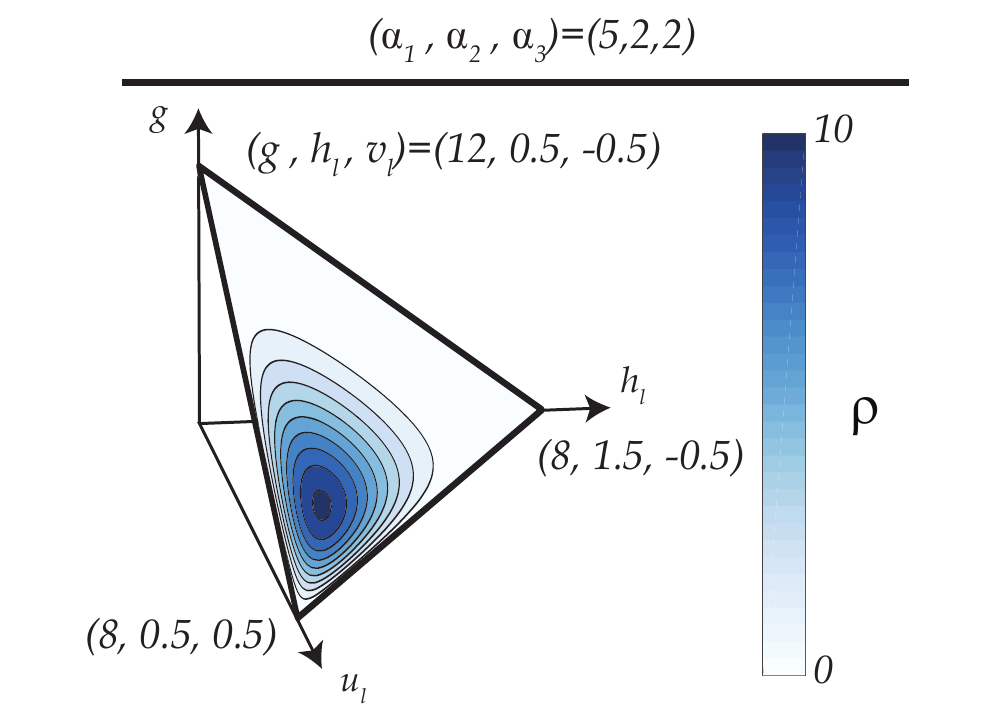}
\caption{\label{fig:SWEPDF} Dirichlet PDF on the scaled unit simplex for $(\alpha_1, \alpha_2, \alpha_3) = (5, 2, 2)$.}
\end{figure}\\
The Dirichlet distribution is defined on a 2D triangular manifold in 3D space and is used for testing the NIPPAS efficiency for non-hypercube random spaces. The interpolation basis is the set of Chebyshev polynomials defined on the smallest hypercube that comprises the 2D manifold. As mentioned before, improvements on the interpolation basis are possible, but are outside the scope of this paper. Furthermore, the QoI $u(z_1, z_2, z_3)$ is the free surface height $h$ at the left boundary $x=-1$ at $t=1$.

The NIPPAS method samples the solution of the dambreak problem for multiple inputs in order to construct a surrogate. A commonly used method for solving the SWEs is a Riemann solver based finite-volume discretisation \cite{leveque_finite_2002}, which can determine accurate solutions efficiently. In this paper instead we demonstrate the effectiveness of the NIPPAS method in combination with a more recently developed numerical method. Instead of using a Riemann solver, a neural network is used to solve the SWEs \cite{raissi_physics_2017}. An advantage of using neural networks for solving PDEs is that \textit{the solution is given in terms of a functional form, from which derivatives can be directly computed analytically}. This functional form allows us to calculate the residual, without requiring alterations to the code output. A multi-output neural network with 7 hidden layers and 40 neurons in each hidden layer is used to solve the SWEs, which is trained on a total of $10^5$ collocation nodes in space and time, which are the places where the neural network tries to enforce the PDE. This particular combination of number of hidden layers and neurons showed the best results and is therefore used in this paper. The trained neural network produces a solution of the PDE. After the residual is computed and a new sample location in random space is determined, the neural network is retrained to produce a solution of the PDE for this new set of parameter values. Previously trained neural networks closest to the new sample location are used as initial starting point for training the new neural network in order to significantly speed-up the training process.
\begin{figure*}[!hbt]
\centering
\includegraphics[width = \textwidth]{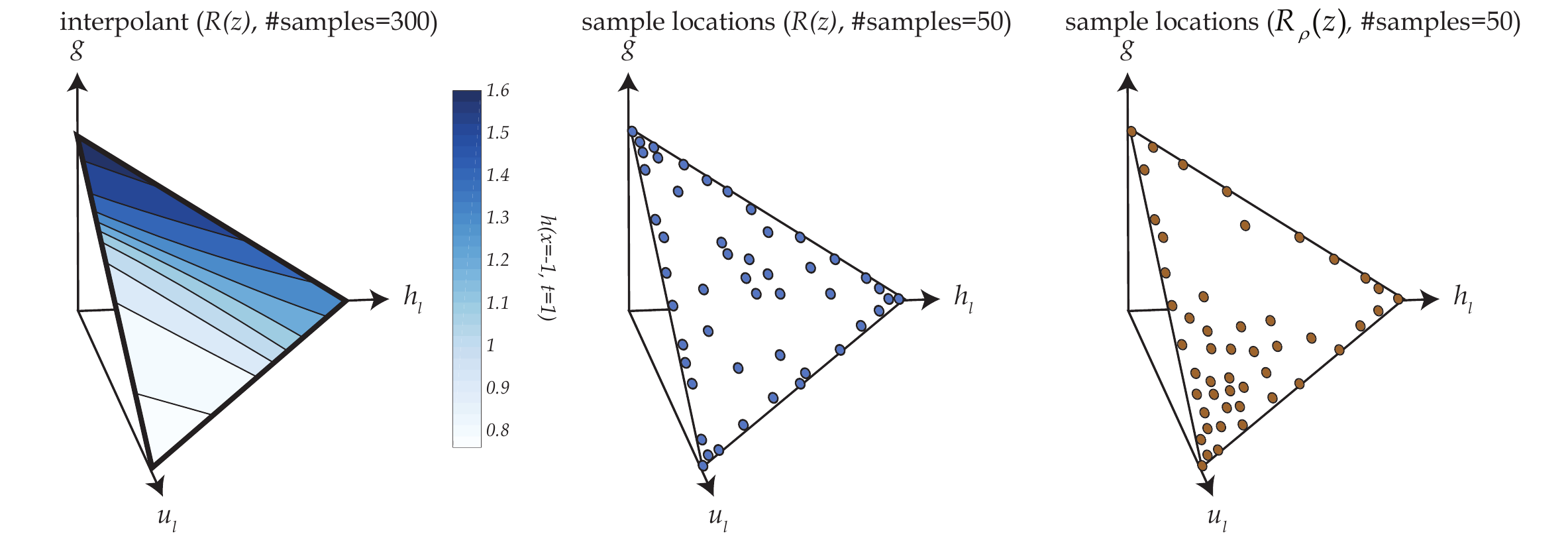}
\caption{\label{fig:SWEResults} (left) Surrogate model based on 300 samples for refinement measure \eqref{eq:Residual}. (right) Sample locations for refinement measures \eqref{eq:Residual} and \eqref{eq:ResidualPDF}, respectively.}
\end{figure*}\\
The surrogate and sample locations for both refinement measures \eqref{eq:Residual} and \eqref{eq:ResidualPDF} are shown in figure \ref{fig:SWEResults}. 
The sample locations show clustering at the boundaries, to produce a stable interpolant. As mentioned before, if the surrogate tends to become unstable and grows at the boundaries of the random space, the residual becomes large at the boundaries as well and causes refinement of the surrogate near the boundaries. However, at early stages of the refinement process the surrogate can still show irregular oscillations, which is due to insufficient refinement at the boundaries, but these disappear upon further refinement. When taking the PDF into account, clustering also occurs in the region of high probability, as expected. This clustering deteriorates the accuracy of the surrogate in regions of low probabilities, but leads to improved estimation of statistical quantities. Convergence comparison for refinement measures \eqref{eq:Residual} and \eqref{eq:ResidualPDF} are shown in figure \ref{fig:SWEResults2}.
\begin{figure*}[!h]
\centering
\includegraphics[width = \textwidth]{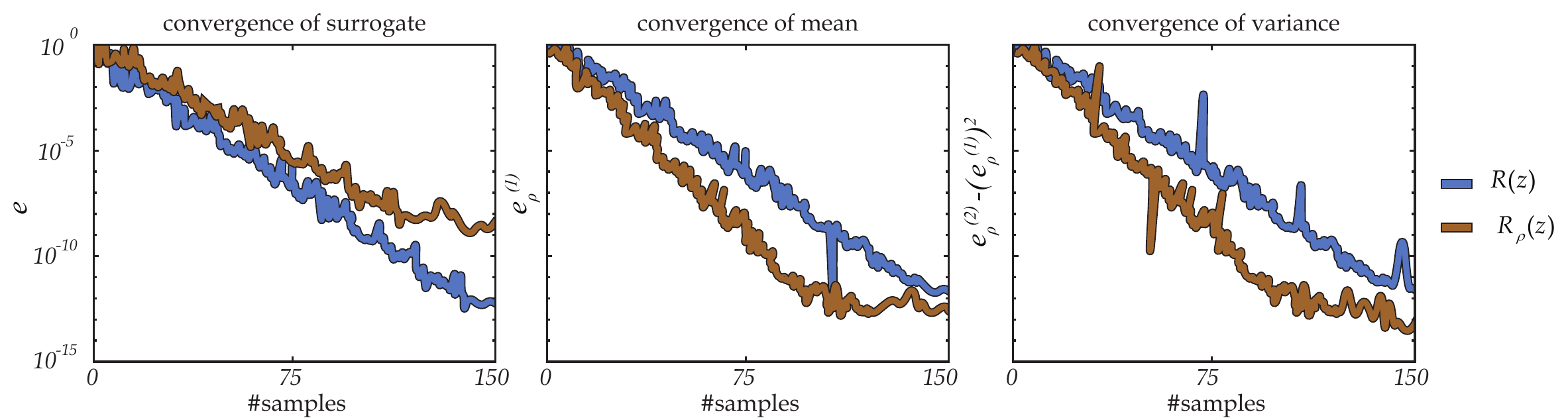}
\caption{\label{fig:SWEResults2} Results for the dambreak problem with random inputs. The convergence plot shows a comparison of the two different refinement measure \eqref{eq:Residual} and \eqref{eq:ResidualPDF}. The error in the surrogate is computed  with \eqref{eq:L2Error} with 5000 Monte-Carlo samples. The errors in the mean and variance are calculated with \eqref{eq:ErrorStatisticalMoments}.}
\end{figure*}\\
The results show indeed faster convergence in statistical quantities when accounting for the PDF in the refinement measure, which was also shown in figure \ref{fig:AdvDiffPDF}.

\section{Conclusion}~\\
\noindent In this paper we have presented a novel approach for parametric surrogate construction when the underlying PDE is known. Our technique, the Non-Intrusive PDE/PDF-informed Adaptive Sampling (NIPPAS), is suited for surrogate construction on non-hypercube parametric spaces. Non-hypercube parametric spaces occur when the underlying PDF is dependent, e.g., Dirichlet-distributed, and significantly complicate surrogate construction when using common stochastic collocation methods, e.g., sparse grid interpolation. The key ingredient of the proposed empirical interpolation procedure is refinement which is based on both the PDE-residual and the PDF. Sampling based on the PDE-residual leads to stable interpolation, even on non-hypercube domains, due to sample clustering at the boundaries of the domain. At the same time, the incorporation of the PDF in the refinement procedure samples the parametric space in regions of high probability, which ensures fast convergence of statistical quantities. This combination makes
the NIPPAS method an efficient and flexible method that is applicable to a wide range of UQ problems.

The NIPPAS method has been applied to several numerical examples: 1D and 2D surrogate construction on a hypercube with linear underlying PDE, 2D surrogate construction on complex domains, and 3D surrogate construction with non-linear underlying PDE on a complex domain. In all cases, exponential convergence is obtained, leading to an accurate surrogate model fast. This surrogate model can be directly used as a tool for uncertainty quantification (for example with Monte-Carlo type methods), but it is also a great tool for the parametric solution of black-box models.

Currently, the interpolation basis for non-hypercube domains is a Chebyshev basis defined on the smallest hypercube that comprises the parametric space. Several improvements could be made, for instance by constructing a suitable basis based on the sample locations \cite{narayan_stochastic_2012}. Furthermore, the global minimisation problem to be solved at each iteration does not scale well to high-dimensional random space, and alternatives for the particle swarm optimisation may be used \cite{goodfellow_deep_2016}.

\section*{Acknowledgements}~\\
\noindent This work is part of the research programme ''SLING'' (Sloshing of Liquefied Natural Gas), which is (partly) financed by the Netherlands Organisation for Scientific Research (NWO).

\section*{References}
\bibliography{MyLibrary}
\end{document}